\newtheorem{theorem}{Theorem}[section]
\newtheorem{proposition}[theorem]{Proposition}
\newtheorem{lemma}[theorem]{Lemma}
\numberwithin{equation}{section}
\theoremstyle{remark}
\newcommand{\R}{\mathbb{R}}
\begin{document}

\title[Finite time blowup]{On finite time blowup for the mass-critical Hartree equations}

\author[Y. Cho, G. Hwang, S. Kwon and S. Lee]{\textbf{Yonggeun Cho}\\
Department of Mathematics, and Institute of Pure and Applied Mathematics, Chonbuk National University, Jeonju 561-756, Republic of Korea\\
(changocho@jbnu.ac.kr)\\
\textbf{Gyeongha Hwang}\\
Department of Mathematical Sciences, Ulsan National Institute of Science and Technology, Ulsan, 689-798, Republic of Korea\\
(ghhwang@unist.ac.kr)\\
\textbf{Soonsik Kwon}\\
Department of Mathematical Sciences, Korea Advanced Institute of Science and Technology , Daejeon 305-701, Republic of Korea\\
(soonsikk@kaist.edu)\\
\textbf{Sanghyuk Lee}\\
Department of Mathematical Sciences, Seoul National University, Seoul 151-747, Republic of Korea\\
(shklee@snu.ac.kr)\\}

\label{firstpage}
\maketitle

\begin{abstract}
We consider the fractional Schr\"odinger equations with focusing Hartree type nonlinearities. When the energy is negative, we show that the solution blows up in a finite time. For this purpose, based on Glassey's argument, we obtain a virial type inequality.
\end{abstract}

\thanks{2010 {\it Mathematics Subject Classification.} M35Q55, 35Q40. }
\thanks{{\it Key words and phrases.} finite time blowup, mass-critical, Hartree equations, virial argument}

\maketitle

\vspace{1cm}

\section{Introduction}

\noindent In this paper we consider the Cauchy problem of the
focusing fractional nonlinear Schr\"odinger equations:
\begin{align}\label{main eqn}\left\{\begin{array}{l}
i\partial_tu = |\nabla|^\alpha u + F(u),\;\;
\mbox{in}\;\;\mathbb{R}^{1+n}
\times \mathbb{R},\\
u(x,0) = \varphi(x)\;\; \mbox{in}\;\;\mathbb{R}^n,
\end{array}\right.
\end{align}
where $|\nabla| = (-\Delta)^\frac12$, $n \ge 2$, $\alpha \ge 1$, and
$F(u)$ is a nonlocal nonlinear term of Hartree type given by
$$
F(u)(x) =
-\Big(\frac{\psi(\cdot)}{|\cdot|^\gamma}
* |u|^2\Big)(x)\,u(x)\\
\equiv - V_\gamma(|u|^2)(x) \,u(x)\,.
$$
Here $0 \le \psi \in L^\infty(\mathbb R^n)$ and $0 < \gamma < n$. We
say that \eqref{main eqn} is focusing since $- V_\gamma(|u|^2)$
serves as an attractive self-reinforcing potential. We also use a
simplified notation $V_\gamma$ to denote $V_\gamma(|u|^2)$.

When $\psi$ is homogeneous of degree zero (e.g. $\psi\equiv 1 $),
the equation \eqref{main eqn} has scaling invariance. In fact, if
$u$ is a solution of \eqref{main eqn},  $u_\lambda$, $\lambda
> 0$, given by
$$
u_\lambda (t, x) = \lambda^{-\frac{\gamma-\alpha}2 + \frac n2} u(\lambda^\alpha \,t, \lambda x ),
$$
is also a solution. We denote the critical Sobolev exponent $s_c =
\frac{\gamma-\alpha}{2}$. Under scaling $u\to u_\lambda$,
$\dot{H}^{s_c}$-norm of data is preserved. The solution $u$ of
\eqref{main eqn} formally satisfies the mass and energy conservation
laws:
\begin{align}\begin{aligned}\label{consv}
m(u) &= \|u(t)\|^2_{L^2}, \\  
E(u) &=  K(u) + V(u), 
\end{aligned}\end{align}
where
$$
K(u) = \frac12 \big\langle u, |\nabla|^\alpha\,
u\big\rangle, \;\;V(u) = -\frac14 \big\langle u, V_\gamma(|u|^2)u
\big\rangle.
$$
Here $\big\langle\cdot,\cdot\big\rangle$ is the complex inner
product in $L^2$. In view of scaling invariance and the conservation
laws - when each conserved quantity is invariant under scaling - we
say the equation \eqref{main eqn} is mass-critical if $\gamma=
\alpha$ and energy-critical if $\gamma=2\alpha$.

The purpose of this paper is to show the finite time blow-up of
solutions to the fractional or higher order equations when
\eqref{main eqn} is mass-critical. If the energy is negative (i.e.
the magnitude of the potential energy $V(u)$ is larger than that of
kinetic part $K(u)$), then self-attracting power dominates  overall
dynamics and so it may result in a collapse in a finite time. For
the usual Schr\"odinger equations ($\alpha = 2$), Glassey \cite{gl}
introduced a convexity argument to show  existence of finite time
blow-up solutions. Indeed, if $\psi \equiv 1$, $2 \le \gamma <
\min(n, 4)$, $ n \ge 3$ and $\varphi \in
H^\frac\gamma2(\mathbb{R}^n)$ with $x\varphi \in L^2$, then
$$
\|\,x u(t)\|_{L^2}^2 \le 8t^2E(\varphi) + 4t\, \big\langle \varphi, A \varphi\,\big\rangle + \|x \varphi\|_{L^2}^2,
$$
where $A$ is the dilation operator $\frac1{2i}(\nabla \cdot x + x
\cdot \nabla)$. This implies that if $E(\varphi) <0 $, then the maximal
time of existence $T^* < \infty$. For details, see Section 6.5 in
\cite{caz} and Section \ref{prop of mom} below. 

In the fractional or high order equations, a variant of the second
moment is the quantity
$$\mathcal M(u) := \big\langle u, x \cdot |\nabla|^{2-\alpha} x u
\big\rangle.$$ This  was first utilized by Fr\"ohlich and Lenzmann
\cite{frohlenz2} in their study of  the semirelativistic nonlinear
Schr\"odinger equations ($\alpha=1$). More precisely, they obtained
$$
\mathcal M(u(t)) \le 2t^2E(\varphi) + 2t\big(\big\langle \varphi, A \varphi\big\rangle + C
\|\varphi\|_{L^2}^4\big)
+ \mathcal M(\varphi)
$$
for $\psi = e^{-\mu|x|}\;(\mu \ge 0)$, $\gamma = 1$, $\varphi \in
H_{rad}^2(\mathbb R^3)$ with $|x|^2 \varphi \in L^2$. Here the function space
$X_{rad}$ denotes the subspace $X$ of radial functions. The quartic term $\|\varphi\|_{L^2}^4$ appears due to the commutator $\big[|x|^2V_\gamma, |\nabla|\big]$,  and in $\mathbb R^3$ it is controlled by Newton's theorem.

Compared to the usual case ($\alpha=2 $), when $\alpha \ne 2$, the
presence of $ |\nabla|^{2-\alpha}$ gives rise to certain types of
singular integrals which necessitate use of commutators. So, the
main issue is how to estimate the commutator $\big[|x|^2V_\gamma,
|\nabla|^{2-\alpha}\big]$ since the Newton's theorem is generally
not available except for $\alpha=1$ in $\R^3$.
In order to obtain the desired estimate we use the Stein-Weiss
inequality \eqref{s-w ineq} and combine this with a convolution
estimate in Lemma \ref{weighted con}. To close our argument, we
further need an estimate for the moments $\|xu\|_{L^2}$ and
$\||x|\nabla u\|_{L^2}$ for $t$ contained in the existence time
interval. (See Proposition \ref{mom prop}.) It is done under some
regularity assumption\footnote{ Such an assumption is not necessary
for the usual Schr\"{o}dinger equation.} which we need to impose to
get estimates for the commutators $\big[\,|\nabla|^\alpha, |x|^2\big]$ and
$\big[\,V_\gamma, \nabla \cdot (x\cdot|\nabla|^{2-\alpha}x) \nabla \big]$.

The Hartree nonlinearity is essentially a cubic one, though it is
convolved with the potential. Thus, by fairly standard an  argument
one can show the local well-posedness of the Cauchy problem for
suitably regular initial data. Indeed, we have local well-posedness
for $s \ge \frac \gamma2$ so that, within the maximal existence time
interval $[0, T^*)$, there is a unique solution $u \in C([0,T^*);
H^s) \cap C^1([0,T^*); H^{s-\alpha})$ and $\lim\limits_{t\nearrow
T^*}\|u(t)\|_{H^\frac\gamma2} = \infty$ if $T^* < \infty$. For
convenience of readers, we append the local well-posedness for
general $\alpha > 0$ in the last section.

Let us define a Sobolev index $\alpha^*$ by $\alpha^* = (2k)^2$
where $k$ is the least integer satisfying $k \ge \frac\alpha2$. We
separately state our results for the low order case, $1\le\alpha <
2$, and the high order case, $2< \alpha < \frac n2+1$.
\begin{theorem}\label{blow}
Let $\gamma = \alpha$, $1 \le \alpha < 2$, and $n \ge 4$. Assume
that $\psi$ is a nonnegative, smooth, decreasing and radial function
with $|\psi'(\rho)| \le C\rho^{-1}$ for some $C>0$. Additionally,
assume that the initial datum satisfies $\varphi \in
H_{rad}^{\alpha^*}$ and $|x|^\ell \partial^\mathfrak j \varphi \in
L^2(\mathbb{R})$ for $1 \le \ell \le 2, 0 \le |\mathfrak j| \le
4-2\ell$. Then,  if $E(\varphi) < 0$, the solution to \eqref{main
eqn} blows up in a finite time.\end{theorem}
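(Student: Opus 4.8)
The plan is to follow Glassey's convexity scheme adapted to the fractional setting, using the modified second moment $\mathcal M(u) = \langle u, x\cdot|\nabla|^{2-\alpha}x\,u\rangle$ as the convex quantity. First I would establish, for sufficiently regular solutions, that $\mathcal M(u(t))$ is twice differentiable in $t$ on the maximal interval $[0,T^*)$, and compute $\frac{d}{dt}\mathcal M(u(t))$ and $\frac{d^2}{dt^2}\mathcal M(u(t))$ by differentiating under the integral, using the equation \eqref{main eqn} to replace $\partial_t u$. This produces a first-variation identity of the form $\frac{d}{dt}\mathcal M(u(t)) = 2\,\mathrm{Re}\,\langle \cdots\rangle$, roughly a dilation-type pairing $\langle u, A\,u\rangle$ plus correction terms coming from the fact that $|\nabla|^{2-\alpha}$ does not commute with multiplication by $x$. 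Differentiating once more and invoking the mass-critical scaling relation $\gamma=\alpha$, I expect the leading terms to collapse (as in the classical identity) into $4\alpha^2 E(\varphi)$ — or an analogous dimensional constant — exactly because under mass-criticality the virial functional is proportional to the energy. The goal is an inequality
\begin{align*}
\mathcal M(u(t)) \le c_1 t^2 E(\varphi) + c_2 t\big(\langle\varphi, A\varphi\rangle + C\|\varphi\|_{L^2}^{4}\big) + \mathcal M(\varphi),
\end{align*}
with $c_1>0$; since $E(\varphi)<0$ and $\mathcal M(u(t))\ge 0$ (positivity of $|\nabla|^{2-\alpha}$ as a Fourier multiplier), the right side becomes negative for large $t$, forcing $T^*<\infty$.

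The key technical steps, in order, are: (1) a \emph{commutator estimate} for $\big[\,|x|^2 V_\gamma,\,|\nabla|^{2-\alpha}\big]$ — this replaces Newton's theorem, which is unavailable for $\alpha\ne 2$ — handled, as the introduction indicates, by writing $|\nabla|^{2-\alpha}$ via its Fourier-integral/Riesz-kernel representation, reducing the commutator to a weighted convolution, and then applying the Stein–Weiss inequality \eqref{s-w ineq} together with the convolution estimate in Lemma \ref{weighted con}; the hypothesis $|\psi'(\rho)|\le C\rho^{-1}$ feeds the bound on $\nabla V_\gamma$ needed here, and $n\ge 4$ together with $1\le\alpha<2$ guarantees the relevant exponents lie in the admissible Stein–Weiss range so the commutator is controlled by $\|u\|_{L^2}$ and the finite moments. (2) An \emph{a priori propagation-of-moments} step: one must know $\|x u(t)\|_{L^2}$ and $\||x|\,|\nabla|^{(2-\alpha)/2}u(t)\|_{L^2}$ (or $\||x|\nabla u\|_{L^2}$) stay finite on compact subintervals of $[0,T^*)$ so that all the pairings defining $\mathcal M$ and its derivatives make sense and the differentiation is justified — this is Proposition \ref{mom prop}, which itself requires the commutator estimates $\big[\,|\nabla|^\alpha, |x|^2\big]$ and $\big[\,V_\gamma, \nabla\cdot(x\cdot|\nabla|^{2-\alpha}x)\nabla\big]$ and is exactly why the regularity assumption $\varphi\in H^{\alpha^*}_{rad}$ and the weighted conditions $|x|^\ell\partial^{\mathfrak j}\varphi\in L^2$ are imposed. (3) A \emph{mollification/approximation} argument to pass from smooth truncated data (where all manipulations are rigorous) to the stated class, using local well-posedness and continuous dependence.

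I would carry this out by first proving the inequality for Schwartz data supported so that all moments are finite, getting the formal virial identity rigorously; then inserting the commutator bound from step (1) to absorb the non-local error into the $C\|\varphi\|_{L^2}^4$ term (constant depending on $\psi$, $n$, $\alpha$); then invoking Proposition \ref{mom prop} to extend validity to the solution with data in the theorem's class on $[0,T^*)$; and finally running the convexity contradiction: if $T^*=\infty$ then $\mathcal M(u(t))<0$ for $t$ large, contradicting $\mathcal M\ge 0$.

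The main obstacle is step (1), the commutator $\big[\,|x|^2 V_\gamma,\,|\nabla|^{2-\alpha}\big]$. Unlike the $\alpha=2$ case where this term is absent, and unlike the $\alpha=1$, $n=3$ case of Fröhlich–Lenzmann where Newton's theorem gives an explicit pointwise bound, for general $1\le\alpha<2$ one must extract genuine smoothing/decay from the singular kernel of $|\nabla|^{2-\alpha}$ against the weight $|x|^2$ and the slowly-decaying potential $V_\gamma$; getting the exponents to fit the Stein–Weiss hypotheses is precisely what forces $n\ge 4$ and drives the specific moment and regularity assumptions on $\varphi$. The radial assumption on $\psi$ and on $\varphi$ is used to simplify these kernels and to ensure $V_\gamma$ and its derivative have the needed monotonicity/decay. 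Once that commutator is tamed, the rest is a careful but routine bookkeeping of the virial computation plus the standard convexity conclusion.
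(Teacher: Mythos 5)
Your overall scheme coincides with the paper's: a two-step virial argument (first $\frac{d}{dt}\langle u,Au\rangle\le 2\alpha E(\varphi)$, then $\frac{d}{dt}\mathcal M(u)\le 2\alpha\langle u,Au\rangle+C\|\varphi\|_{L^2}^4$, integrated twice to give \eqref{mom}), the positivity of $\mathcal M$ against $E(\varphi)<0$, and Proposition \ref{mom prop} to justify that the moments stay finite on $[0,T^*)$ so the differentiations make sense. That part is fine.

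The genuine gap is in your step (1), and it sits exactly in the range $1\le\alpha<2$ covered by this theorem. You propose to handle $\big[\,|x|^2V_\alpha,|\nabla|^{2-\alpha}\big]$ by "writing $|\nabla|^{2-\alpha}$ via its Riesz-kernel representation, reducing the commutator to a weighted convolution, and applying Stein--Weiss plus Lemma \ref{weighted con}." That reduction works only when $2-\alpha<0$, i.e.\ in the high-order case $\alpha>2$ of Theorem \ref{blow2}, where $|\nabla|^{2-\alpha}$ is a genuine Riesz potential with locally integrable kernel. For $1\le\alpha<2$ the operator $|\nabla|^{2-\alpha}$ is of \emph{positive} order: its kernel is hypersingular, of size $|x-y|^{-n-(2-\alpha)}$, and even after exploiting the cancellation $g(y)-g(x)$ with $g=|x|^2V_\alpha\in\dot\Lambda^{2-\alpha}$ the commutator kernel is only $O(|x-y|^{-n})$ --- borderline non-integrable and outside the admissible range $0<\lambda<n$ of the Stein--Weiss inequality \eqref{s-w ineq}. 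So a direct "weighted convolution $+$ Stein--Weiss" bound cannot close \eqref{L2-bound}. What the paper actually does here is a Calder\'on--Zygmund argument: (i) prove the H\"older bound $\|\,|x|^2V_\alpha\|_{\dot\Lambda^{2-\alpha}}\lesssim\|\varphi\|_{L^2}^2$ (this is where $|\psi'(\rho)|\le C\rho^{-1}$, Lemma \ref{weighted con}, mass conservation and the restriction $\alpha<n-2$, hence $n\ge4$, enter); (ii) check that the commutator kernel satisfies the standard CZ size and regularity estimates; (iii) invoke the restricted-boundedness criterion (Theorem 3, p.~294 of \cite{st}, a $T(1)$-type theorem) to reduce $L^2$-boundedness to testing on normalized bumps $\zeta^{x_0,N}$; and (iv) verify that testing condition via the decomposition $\big[|\nabla|^{2-\alpha},g\big]=\sum_j\big[T_j,g\big]\partial_j+\sum_jT_j(\partial_jg)$ with $T_j=-|\nabla|^{2-\alpha}(-\Delta)^{-1}\partial_j$, where Stein--Weiss is used only for the second, milder piece. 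Your proposal contains no substitute for steps (ii)--(iv) (nor a citation to an equivalent commutator theorem for $[|\nabla|^s,g]$ with $g\in\dot\Lambda^s$, $0<s\le1$), so as written the central estimate $\big|\langle u,[|x|^2V_\alpha,|\nabla|^{2-\alpha}]u\rangle\big|\lesssim\|\varphi\|_{L^2}^4$ is not established; everything downstream (the bound \eqref{2nd virial} and the blow-up conclusion) depends on it.
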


\begin{theorem}\label{blow2}
Let $\gamma = \alpha$, $2 < \alpha < 1 + \frac n2$ and $n \ge 4$.
Assume that $\psi$ is a nonnegative, smooth, decreasing and radial
function. Additionally, assume that the initial datum satisfies
$\varphi \in H_{rad}^{\alpha^*}$ and $|x|^{\ell} \partial^\mathfrak
j\varphi \in L^2(\mathbb{R})$ for $1 \le \ell \le 2k, 0 \le
|\mathfrak j| \le 2k(2k-\ell) $. Then,  if $E(\varphi) < 0$,  the
solution to \eqref{main eqn} blows up in a finite time.
\end{theorem}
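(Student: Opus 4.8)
We prove Theorem~\ref{blow2}; the argument follows the same scheme as for Theorem~\ref{blow}, now run with $k=\lceil\alpha/2\rceil\ge 2$ in place of $k=1$. The plan is to carry out Glassey's convexity argument \cite{gl} using the fractional second moment introduced in \cite{frohlenz2},
$$\mathcal M(u)=\big\langle u,\ x\cdot|\nabla|^{2-\alpha}x\,u\big\rangle=\big\||\nabla|^{1-\frac\alpha2}(xu)\big\|_{L^2}^2 .$$
Because $\alpha>2$, the operator $|\nabla|^{2-\alpha}$ is a Riesz potential of positive order, so $\mathcal M(u)\ge0$; and by Proposition~\ref{mom prop}, under the hypotheses $\varphi\in H^{\alpha^*}_{rad}$ and $|x|^\ell\partial^{\mathfrak j}\varphi\in L^2$ for $1\le\ell\le2k$, $0\le|\mathfrak j|\le2k(2k-\ell)$, the quantity $\mathcal M(u(t))$ is finite and of class $C^2$ on the maximal interval $[0,T^*)$. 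The target is the virial-type inequality
$$\mathcal M(u(t))\ \le\ c\,E(\varphi)\,t^2+C(\varphi)\,t+\mathcal M(\varphi),\qquad t\in[0,T^*),$$
with $c>0$ and $C(\varphi)$ depending on $\varphi$ only through $\|\varphi\|_{L^2}$ and its first moments. Granting it, when $E(\varphi)<0$ the right-hand side tends to $-\infty$ as $t\to\infty$, which is incompatible with $\mathcal M(u(t))\ge0$; hence $T^*<\infty$, and by the local theory recalled above this means $\|u(t)\|_{H^{\gamma/2}}\to\infty$ as $t\nearrow T^*$, i.e.\ finite time blowup.

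\textbf{The virial computation.} To obtain the inequality we differentiate $\mathcal M(u(t))$ along \eqref{main eqn}. The linear term $|\nabla|^\alpha u$ produces the commutator $\big[\,|\nabla|^\alpha,\ x\cdot|\nabla|^{2-\alpha}x\,\big]$; writing $|\nabla|^\alpha=|\nabla|^{\alpha-2k}(-\Delta)^k$, commuting the order-$2k$ differential operator $(-\Delta)^k$ through the weights $x$, and symmetrizing, this collapses to a dilation bracket $c_1\langle u,Au\rangle$ plus remainder terms built from $\big[\,|\nabla|^{\alpha-2k},|x|^2\big]$ and from lower-order pieces. The Hartree term $-V_\gamma u$ contributes, after symmetrizing the $V_\gamma$-integral and integrating by parts, a multiple of $\langle u,V_\gamma u\rangle$, a term carrying $x\cdot\nabla\psi$, and the commutator $\big[\,|x|^2V_\gamma,\ |\nabla|^{2-\alpha}\,\big]$. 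Differentiating the dilation bracket once more and using that the mass-critical relation $\gamma=\alpha$ renders the virial (Pohozaev) identity scale-invariant, the leading terms assemble into $c\,E(u(t))=c\,E(\varphi)$; the $x\cdot\nabla\psi$ term has the right sign because $\psi$ is radial and decreasing, and the commutator remainders are to be absorbed into $C(\varphi)$. A final integration in $t$ then yields the displayed inequality.

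\textbf{Main obstacle.} The crux is the estimate for $\big[\,|x|^2V_\gamma,\ |\nabla|^{2-\alpha}\,\big]$ and, in Proposition~\ref{mom prop}, for the companion commutators $\big[\,|\nabla|^\alpha,|x|^2\big]$ and $\big[\,V_\gamma,\ \nabla\cdot(x\cdot|\nabla|^{2-\alpha}x)\nabla\,\big]$, Newton's theorem being unavailable for $\alpha\ne1$ in $\mathbb R^3$. I would bound the kernel of $|\nabla|^{2-\alpha}$ against the weights using the Stein--Weiss inequality \eqref{s-w ineq} together with the weighted convolution estimate of Lemma~\ref{weighted con}; the conditions $2<\alpha<1+\frac n2$ and $n\ge4$ are exactly what make the Stein--Weiss exponents admissible, and smoothness of $\psi$ allows the derivatives to fall cleanly on $V_\gamma$. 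By mass conservation the commutator contributions are then $\le C\|u(t)\|_{L^2}^4=C\|\varphi\|_{L^2}^4$, giving $\frac{d}{dt}\mathcal M(u(t))\le c\,E(\varphi)\,t+C(\varphi)$ as needed. The genuinely demanding piece — and the source of the strong hypotheses $\varphi\in H^{\alpha^*}_{rad}$ with $\alpha^*=(2k)^2$ and the nested weighted-$L^2$ conditions — is Proposition~\ref{mom prop} itself: propagating finiteness of $\|xu(t)\|_{L^2}$ and $\||x|\nabla u(t)\|_{L^2}$ forces one to commute the order-$2k$ operator $(-\Delta)^k$ through $|x|^2$ repeatedly, each pass trading regularity for decay, so the scheme closes only if $\varphi$ already possesses this much smoothness and weighted integrability.
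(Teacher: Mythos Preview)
Your overall strategy matches the paper's: Glassey's convexity argument via the fractional moment $\mathcal M(u)$, the two-step differentiation through the dilation operator $A$, bounding the Hartree commutator by $C\|\varphi\|_{L^2}^4$ using Stein--Weiss together with the radial convolution estimate of Lemma~\ref{weighted con}, and invoking Proposition~\ref{mom prop} to justify finiteness of the moments. Two technical points in your virial computation are garbled, however, and would mislead you if followed literally.

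First, the linear commutator is \emph{exact}. Using only the identity $[\,|\nabla|^\alpha,x\,]=-\alpha|\nabla|^{\alpha-2}\nabla$ one obtains
\[
[\,|\nabla|^\alpha,\ x\cdot|\nabla|^{2-\alpha}x\,]=-\alpha(x\cdot\nabla+\nabla\cdot x)=-2i\alpha A,
\]
with \emph{no} remainder. There is no need to factor $|\nabla|^\alpha=|\nabla|^{\alpha-2k}(-\Delta)^k$ and commute the polynomial part through the weights, and no terms of the form $[\,|\nabla|^{\alpha-2k},|x|^2\,]$ ever appear at this stage. The Bessel-kernel splitting $|\nabla|^\alpha D^{-2k}$ is used only inside the proof of Proposition~\ref{mom prop}, not in the virial identity itself.

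Second, you conflate the two derivative levels. In $\frac{d}{dt}\mathcal M(u)$ the nonlinear contribution is the full commutator $-i\langle u,[V_\alpha,M]u\rangle$, which the paper rewrites as
\[
\big[\,|x|^2V_\alpha,\ |\nabla|^{2-\alpha}\big]\ +\ (\alpha-2)\Big(V_\alpha\, x\cdot\tfrac{\nabla}{|\nabla|}\,|\nabla|^{1-\alpha}+|\nabla|^{1-\alpha}\,\tfrac{\nabla}{|\nabla|}\cdot x\,V_\alpha\Big);
\]
\emph{both} pieces (you omit the second) are bounded by $C\|\varphi\|_{L^2}^4$ via Lemma~\ref{weighted con} and the Stein--Weiss inequality, so the entire Hartree contribution at this level is a remainder, yielding Lemma~\ref{virial-lemma3}. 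The potential-energy term $\langle u,V_\gamma u\rangle$ and the $\psi'$ term that you list arise only at the next step, in $\frac{d}{dt}\langle u,Au\rangle$ (Lemma~\ref{virial-lemma1}), where they combine into $2\alpha E(\varphi)$ plus a nonpositive correction (using $\psi'\le0$ and $\gamma=\alpha$). With these two corrections your sketch becomes the paper's proof.
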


The restriction $n \ge 4$ is due to the use of the Stein-Weiss
inequality. The technical condition $\alpha < 1 + \frac n2$ is
imposed because we make use of the convolution estimate
\eqref{weighted1} (Lemma \ref{weighted con}) and $ n\ge 4$. For the
proof of theorems we show that the mean dilation is decreasing when
$ E(\varphi) <0$. Clearly, this follows from
\begin{align}\label{mean-dilation}
\frac {d}{dt}\big\langle u, A u\big\rangle \le 2\alpha E(\varphi),
\end{align}
which holds whenever  $\gamma \ge \alpha$ and $\psi' \le 0$. If
$\gamma = \alpha$, from the estimate \eqref{2nd virial}, we have,
for $t \in [0, T^*)$,
\begin{align}\begin{aligned}\label{mom}
\mathcal M(u) \le 2\alpha^2 t^2E(\varphi) + 2\alpha t\big(\big\langle \varphi, A \varphi\big\rangle + C
\|\varphi\|_{L^2}^4\big)
+ \mathcal M(\varphi).
\end{aligned}\end{align}
In order to validate \eqref{mean-dilation} and \eqref{mom}, we need
estimates for the moments $\|x u\|_{L^2}$ and $\||x|\nabla
u\|_{L^2}$ on the time interval $[0,T^*)$.

We finally remark that the argument of this paper dose not readily
work for the power type nonlinearity. Since our argument relies on
$H^{\alpha^*}$ regularity assumption and the Stein-Weiss inequality, a
different approach seems to be necessary in order to control the
commutators.

The rest of paper is organized as follows. In Section 2 we show
the finite time blow-up while assuming Proposition \ref{mom prop}. In
Section \ref{prop of mom} we provide the proof of Proposition
\ref{mom prop}. The last section is devoted to the local well-posedness.

\subsection*{Notations}
\noindent We use the notations:
$\partial^{\mathfrak j} = \prod_{1 \le i \le n}\partial_i^{j_i}$ for
multi-index $\mathfrak j = (j_1, \cdots, j_n)$ and $|\mathfrak
j|=\sum_i j_i$. $|\nabla| = \sqrt{-\Delta}$, $\dot
H_r^s = |\nabla|^{-s}L^r$, $\dot H^s=\dot H_2^s$, and $H_r^s = (1 -
\Delta)^{-s/2} L^r$, $H^s = H_2^s$. $A \lesssim B$ and $A \gtrsim B$
means that $A \le CB$ and $A \ge C^{-1}B$, respectively, for some
$C>0$. As usual $C$ denotes a positive constant, possibly depending
on $n, \alpha$ and $\gamma$, which may differ at each occurrence.

\section{Finite time blow-up}
In this section we consider finite time blow-up of solutions to the
Cauchy problem \eqref{main eqn} of the mass-critical potentials. We
begin with the dilation operator $A$. With more general assumption
for $\psi$ and $\gamma$ we obtain an estimate for the time evolution
of average of $A$.
\begin{lemma}\label{virial-lemma1}
 Let $\psi$ be radially
symmetric smooth function such that $\psi' = \partial_r\psi\le 0$.
Suppose that $u \in H^{\alpha}$ and $x u(t), |x|  \nabla u(t) \in
L^2$ for $t \in [0, T^*)$, where $T^*$ is the maximal existence
time. Then, for $\gamma \ge \alpha$,
\begin{align}\label{1st virial}
\frac{d}{dt}\big\langle u, A u\big\rangle \le 2\alpha E(\varphi).
\end{align}
\end{lemma}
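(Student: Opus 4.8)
The plan is to compute the time derivative of $\langle u, Au\rangle$ directly from the equation \eqref{main eqn}, using the commutator structure of the dilation operator $A = \frac{1}{2i}(\nabla\cdot x + x\cdot\nabla)$. First I would recall the standard Heisenberg-type identity $\frac{d}{dt}\langle u, Au\rangle = \langle u, i[|\nabla|^\alpha + F(u), A]\, u\rangle$ (plus the term coming from the time-dependence of $F$), and note that the regularity hypotheses $u\in H^\alpha$, $xu, |x|\nabla u\in L^2$ are exactly what is needed to justify that the pairings are finite and the formal computation is legitimate. The operator $A$ is the infinitesimal generator of $L^2$-preserving dilations $u\mapsto \lambda^{n/2}u(\lambda\cdot)$, so its commutator with a homogeneous Fourier multiplier is easy to read off: $[|\nabla|^\alpha, A] = -i\alpha|\nabla|^\alpha$ (in the sense that $e^{-s\alpha}|\nabla\cdot\lambda^{-1}|^\alpha$ has derivative $-\alpha|\nabla|^\alpha$ at $\lambda=1$), giving the kinetic contribution $2\alpha K(u)$.

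Next I would handle the nonlinear part. Writing $F(u) = -V_\gamma u$ with $V_\gamma = \frac{\psi}{|\cdot|^\gamma}*|u|^2$, the contribution of the Hartree term to $\frac{d}{dt}\langle u, Au\rangle$ is, after integration by parts, a multiple of $\langle u, (x\cdot\nabla V_\gamma)\, u\rangle = \int (x\cdot\nabla V_\gamma)|u|^2\,dx$. Because the kernel $\psi(y)/|y|^\gamma$ is radial with $\psi'\le 0$, one computes $x\cdot\nabla_x V_\gamma(x) = \int (x\cdot\nabla_x)\big(\psi(x-z)/|x-z|^\gamma\big)|u(z)|^2\,dz$, and symmetrizing in $x,z$ replaces $x\cdot\nabla_x$ by $\frac12 (x-z)\cdot\nabla$ acting on the kernel. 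Since $\frac{d}{dr}(\psi(r)/r^\gamma) = \psi'(r)/r^\gamma - \gamma\psi(r)/r^{\gamma+1}$, radial differentiation gives $r\frac{d}{dr}(\psi/r^\gamma) = r\psi'/r^\gamma - \gamma\psi/r^\gamma \le -\gamma\psi/r^\gamma$ using $\psi'\le 0$. Plugging this back, the Hartree contribution is bounded above by $-\gamma$ times $\frac12\langle u, V_\gamma u\rangle$, i.e. (tracking constants) it is $\le 2\gamma V(u)$ where $V(u) = -\frac14\langle u, V_\gamma u\rangle \le 0$.

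Combining the two pieces yields $\frac{d}{dt}\langle u, Au\rangle \le 2\alpha K(u) + 2\gamma V(u)$. Since $V(u)\le 0$ and $\gamma\ge\alpha$, we have $2\gamma V(u)\le 2\alpha V(u)$, hence the bound is $\le 2\alpha(K(u)+V(u)) = 2\alpha E(u) = 2\alpha E(\varphi)$ by energy conservation. This gives \eqref{1st virial}.

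The main obstacle I anticipate is not the algebra but the rigorous justification: making sure that under only $H^\alpha$ regularity (rather than $H^{\alpha^*}$) together with the two weighted-$L^2$ bounds, all the integrations by parts are valid, the commutator $[|\nabla|^\alpha, A]u$ genuinely lies in the right space so that $\langle u, [|\nabla|^\alpha,A]u\rangle$ is finite, and the differentiation under the integral/pairing sign is licensed. For $\alpha\neq 2$ the multiplier $|\nabla|^\alpha$ is nonlocal, so the pairing $\langle xu, |\nabla|^\alpha\,\cdot\rangle$-type terms have to be controlled via the assumptions $xu,\ |x|\nabla u\in L^2$ on the whole interval; I would likely argue first for Schwartz data (or suitably regularized data) where everything is classical, derive the inequality, and then pass to the limit using the stated hypotheses and continuity of the flow in the relevant norms. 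The sign condition $\psi'\le0$ and the constraint $\gamma\ge\alpha$ are each used exactly once, as pointed out above, so it is worth flagging precisely where they enter.
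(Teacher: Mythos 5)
Your argument is correct and follows essentially the same route as the paper: the Heisenberg identity $\frac{d}{dt}\langle u,Au\rangle = i\langle u,[H,A]u\rangle$, the commutator $[|\nabla|^\alpha,A]=-i\alpha|\nabla|^\alpha$, symmetrization of $\langle u,(x\cdot\nabla V_\gamma)u\rangle$ in $x,z$ so that $r\frac{d}{dr}(\psi(r)r^{-\gamma})\le-\gamma\psi(r)r^{-\gamma}$ gives the bound $2\gamma V(u)$, and then $\gamma\ge\alpha$, $V(u)\le0$ and energy conservation, with all constants matching. One small remark: there is no extra term from the time-dependence of $F$ in the Heisenberg identity (since $A$ is time-independent and $H(t)$ is self-adjoint at each time), so the parenthetical caveat is unnecessary, though it plays no role in your computation.
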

\begin{proof}
Since $u \in H^{\alpha}$ and $|x|u, x\cdot \nabla u \in L^2$,
$\big\langle u, A u\big\rangle$ is well-defined and so is
\begin{align}\label{commut0}
\frac{d}{dt}\big\langle u, A u\big\rangle = i\big\langle u, \big[H, A\big] u\big\rangle,
\end{align}
where $H = |\nabla|^\alpha - V_\gamma$. Here $\big[H, A\big]$ denotes the
commutator $HA - AH$. Using the identity $\big[|\nabla|^\alpha, x\big] =
 -\alpha |\nabla|^{\alpha-2} \nabla$, we have
\begin{align}\label{commut1}
\big[\,|\nabla|^\alpha, A\big] = -i \alpha |\nabla|^\alpha.
\end{align}
Similarly,
\begin{align}\label{commut2}
\big[\, - V_\gamma, A\big] = -i (x \cdot \nabla) V_\gamma.
\end{align}
Substituting \eqref{commut1} and \eqref{commut2} into \eqref{commut0}, we get
\begin{align}\label{virial0}
\frac{d}{dt}\big\langle u, A u\big\rangle = \alpha\big\langle u, |\nabla|^\alpha
u \big\rangle + \big\langle u, (x \cdot \nabla) V_\gamma u\big\rangle.
\end{align}
For Hartree type $V_\gamma$, we have
\begin{align*}
(x \cdot \nabla) V_\gamma &= -\gamma \int \frac{\psi(|x-y|)}{|x-y|^{\gamma}}|u(y)|^2\,dy + \int \frac{\psi'(|x-y|)}{|x-y|^{\gamma}}|x-y||u(y)|^2\,dy\\
&\qquad- \int \big(\gamma \frac{\psi(|x-y|)}{|x-y|^{\gamma+1}} -
\frac{\psi'(|x-y|)}{|x-y|^{\gamma}}\big)\frac{y\cdot
(x-y)}{|x-y|}|u(y)|^2\,dy,
\end{align*}
\begin{align*}
\big\langle u, (x \cdot \nabla)V_\gamma u\big\rangle &=  4\gamma V(u) + \int\!\!\!\int \frac{|x-y|\psi'(|x-y|)}{|x-y|^{\gamma}}|u(x)|^2|u(y)|^2\,dxdy\\
&- \big\langle u, (x \cdot \nabla) V_\gamma u\big\rangle,
\end{align*}
which implies
\begin{align*}
\big\langle u, (x \cdot \nabla) V_\gamma u\big\rangle &= 2\gamma V(u) +
\frac12\int\!\!\!\int \frac{|x-y|
\psi'(|x-y|)}{|x-y|^{\gamma}}|u(x)|^2|u(y)|^2\,dxdy.
\end{align*}
Substituting this into \eqref{virial0} gives
\begin{align*}
\frac{d}{dt}\big\langle u, A u\big\rangle &\le 2\alpha E(\varphi) +
2(\gamma-\alpha)V(u)\\
&\qquad\qquad + \frac12\int\!\!\!\int\big(
|x-y|\psi'(|x-y|)\big)\frac{|u(x)|^2|u(y)|^2}{|x-y|^\alpha}\,dxdy.
\end{align*}
Since $\gamma \ge \alpha$ and $\psi'(|x|) \le 0$, we get \eqref{1st
virial}. This completes the proof of Lemma \ref{virial-lemma1}.
\end{proof}
Next we consider the nonnegative quantity $\mathcal M(u) = \big\langle u, M u \big\rangle$ with the virial operator
$$
M := x \cdot |\nabla|^{2-\alpha} x = \sum_{k = 1}^n x_k |\nabla|^{2-\alpha}x_k.
$$
Suppose $u(t) \in H^{\alpha^*}$ and $|x|^{2k}u(t) \in L^2$ for $t
\in [0, T^*)$. Then, since $\mathcal M(u) \lesssim
\||x|\nabla u\|_{L^2}\|(1 + |x|)^{2k}u\|_{L^2}$, from \eqref{appr3} below the quantity
$\mathcal M(u)$ is well-defined and finite for all $t \in [0,T^*)$,
and so is
\begin{align}\label{virial1}
\frac{d}{dt}\mathcal M(u) = i\big\langle u, \big[H, M\big]u\big\rangle  =
i\big\langle u, \big[\,|\nabla|^\alpha, M\big] u\big\rangle -i\big\langle u, \big[V_\gamma,
M\big]u\big\rangle.
\end{align}

\begin{lemma}\label{virial-lemma3}
Suppose that $u(t) \in H^{\alpha^*}$ and $|x|^{2k}u(t) \in L^2$ for $t \in [0, T^*)$.
Then we have
\begin{align}\label{2nd virial}
\frac{d}{dt}\mathcal M(u) \le 2\alpha\big\langle u, A u \big\rangle +
C\|\varphi\|_{L^2}^4
\end{align}
for $t \in [0, T^*)$, where $C$ is a positive constant depending only on $n, \alpha$ but not on $u, \varphi$.
\end{lemma}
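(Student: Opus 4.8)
The plan is to start from the Heisenberg-type identity \eqref{virial1}, which already splits $\frac{d}{dt}\mathcal M(u)$ into the free part $i\langle u,[\,|\nabla|^\alpha,M]u\rangle$ and the interaction part $-i\langle u,[V_\gamma,M]u\rangle$. I expect the first to reproduce the main term $2\alpha\langle u,Au\rangle$ exactly, and the second to be absorbed into the quartic error. Since $V_\gamma$ (a multiplication operator) and $M$ are both self-adjoint, $[V_\gamma,M]$ is skew-adjoint, so $-i\langle u,[V_\gamma,M]u\rangle$ is real; the whole lemma will follow once one shows $|\langle u,[V_\gamma,M]u\rangle|\le C\|u(t)\|_{L^2}^4$, since mass conservation gives $\|u(t)\|_{L^2}=\|\varphi\|_{L^2}$.

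For the free part I would compute $[\,|\nabla|^\alpha,M]$ purely algebraically. Writing $M=\sum_k x_k|\nabla|^{2-\alpha}x_k$ and using, as in the proof of Lemma \ref{virial-lemma1}, the identity $[\,|\nabla|^\alpha,x]=-\alpha|\nabla|^{\alpha-2}\nabla$ together with the facts that $|\nabla|^\alpha$, $|\nabla|^{2-\alpha}$ and $\nabla$ mutually commute and $[\partial_k,x_k]=1$, the double commutator collapses after cancellation to $[\,|\nabla|^\alpha,M]=-2\alpha(x\cdot\nabla)-n\alpha=-2i\alpha A$, where the last equality is just $A=\frac1{2i}(\nabla\cdot x+x\cdot\nabla)$. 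Hence $i\langle u,[\,|\nabla|^\alpha,M]u\rangle=2\alpha\langle u,Au\rangle$.

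For the interaction part I would first reduce $[V_\gamma,M]$ to a commutator with $|\nabla|^{2-\alpha}$ alone. Since multiplication by $V_\gamma$ commutes with multiplication by $x_k$, $[V_\gamma,M]=\sum_k x_k[V_\gamma,|\nabla|^{2-\alpha}]x_k$; equivalently, after expanding $M=|x|^2|\nabla|^{2-\alpha}-(2-\alpha)x\cdot|\nabla|^{-\alpha}\nabla$, the form $\langle u,[V_\gamma,M]u\rangle$ equals $\langle u,[\,|x|^2V_\gamma,|\nabla|^{2-\alpha}]u\rangle$ plus lower-order corrections carrying one power of $|x|$ and the milder operator $|\nabla|^{-\alpha}\nabla$ of order $1-\alpha$ (these vanish when $\alpha=2$). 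The commutator $[\,V_\gamma,|\nabla|^{2-\alpha}]$ is an integral operator with kernel $(V_\gamma(x)-V_\gamma(y))K_{2-\alpha}(x-y)$, where $K_{2-\alpha}$ is the kernel of $|\nabla|^{2-\alpha}$: for $1\le\alpha<2$ it behaves like $|x-y|^{-n-(2-\alpha)}$, while for $\alpha>2$ it is the locally integrable Riesz kernel $|x-y|^{-n+(\alpha-2)}$. In the singular range $\alpha<2$ I would use the hypotheses on $\psi$ — in particular $|\psi'(\rho)|\lesssim\rho^{-1}$, through a pointwise bound for $\nabla(|x|^2V_\gamma)$ — to extract one factor of $|x-y|$ from the weighted potential difference, turning the kernel into the integrable $|x-y|^{-n+\alpha-1}$; for $\alpha>2$ no cancellation is needed. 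Substituting $V_\gamma(x)=\int\psi(|x-z|)|x-z|^{-\gamma}|u(z)|^2\,dz$ with $\gamma=\alpha$, the quadratic form becomes a triple integral in $x,y,z$ with four factors of $u$, weights $|x|,|y|$, a Riesz-type kernel $|x-z|^{-\alpha}$ (or $|y-z|^{-\alpha-1}$ after differentiation) from $V_\alpha$, and the kernel just described in $|x-y|$. I would integrate in $z$ first by the weighted convolution estimate \eqref{weighted1} of Lemma \ref{weighted con}, then close the $x,y$ integration with the Stein-Weiss inequality \eqref{s-w ineq}, landing at $C\|u(t)\|_{L^2}^4$. Throughout, the a priori assumptions $u(t)\in H^{\alpha^*}$ and $|x|^{2k}u(t)\in L^2$ guarantee via \eqref{appr3} that $\mathcal M(u)$ and all intermediate expressions are finite, so the formal commutator identities are legitimate after running the computation on smooth rapidly decaying approximations and passing to the limit.

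The crux is the estimate for $[\,|x|^2V_\gamma,|\nabla|^{2-\alpha}]$ when $\alpha<2$: one must extract precisely one power of $|x-y|$ from $|x|^2V_\gamma(x)-|y|^2V_\gamma(y)$ — which forces pointwise control of $\nabla(|x|^2V_\gamma)$ and hence the decay assumption on $\psi'$ — and then organize the resulting three-fold integral so that its exponents meet the admissibility conditions of the Stein-Weiss inequality. It is exactly this bookkeeping, together with Lemma \ref{weighted con}, that forces $n\ge4$ and $\alpha<1+\frac n2$. The rest is routine algebra and approximation.
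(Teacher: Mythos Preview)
Your free-part computation and the overall decomposition of the interaction term match the paper exactly, and for $\alpha>2$ your direct kernel argument is essentially the paper's: one writes the first term as $2\,|\mathrm{Im}\langle u,|x|^2V_\alpha|\nabla|^{2-\alpha}u\rangle|$, replaces $|\nabla|^{2-\alpha}$ by its Riesz kernel, bounds $|x|^2V_\alpha\lesssim|x|^{2-\alpha}\|\varphi\|_{L^2}^2$ via Lemma~\ref{weighted con}, and closes with Stein--Weiss.

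The gap is in the range $1\le\alpha<2$. You propose to ``extract one factor of $|x-y|$'' from $g(x)-g(y)$, $g=|x|^2V_\alpha$, via a pointwise bound on $\nabla g$, and then work with the ``integrable'' kernel $|x-y|^{-n+\alpha-1}$. Two things fail here. First, $\nabla g$ is \emph{not} bounded: Lemma~\ref{weighted con} together with $|\psi'(\rho)|\lesssim\rho^{-1}$ only gives $|\nabla g(x)|\lesssim|x|^{1-\alpha}\|\varphi\|_{L^2}^2$, which blows up at the origin for $\alpha>1$; the best uniform regularity one can prove is $g\in\dot\Lambda^{2-\alpha}$, so the extraction yields $|x-y|^{2-\alpha}$, not $|x-y|$. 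Second, even granting your exponent, $|x-y|^{-n+\alpha-1}$ is never integrable (and for $\alpha=1$ it is $|x-y|^{-n}$, not even locally so), so the triple-integral plan you sketch does not reduce to a Stein--Weiss estimate in any obvious way---the endpoint $\alpha=1$ in particular sits exactly at a CZ singularity, not a fractional integral.

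What the paper does instead is to treat $[|\nabla|^{2-\alpha},g]$ as a Calder\'on--Zygmund operator: the H\"older bound $\|g\|_{\dot\Lambda^{2-\alpha}}\lesssim\|\varphi\|_{L^2}^2$ gives the standard kernel size $|x-y|^{-n}$ and the required regularity, and $L^2$-boundedness is then obtained via the $T(1)$-type criterion (Theorem~3, p.~294 of \cite{st}), reducing to the bump-function test \eqref{rest-bounded}. That test is verified by writing $[|\nabla|^{2-\alpha},g]=\sum_j[T_j,g]\partial_j+\sum_jT_j(\partial_jg)$ with $T_j=-|\nabla|^{2-\alpha}(-\Delta)^{-1}\partial_j$ and estimating each piece directly (here Stein--Weiss reappears for the $T_j(\partial_jg)$ part when $1<\alpha<2$). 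This Calder\'on--Zygmund step is the missing ingredient in your sketch; without it the singular case does not close.
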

Now, Theorem \ref{blow} and Theorem \ref{blow2} immediately  follow
from Lemma \ref{virial-lemma1} and \ref{virial-lemma3} once we have
Proposition \ref{mom prop}.

\begin{proof} By the identity $|\nabla|^\alpha x = x|\nabla|^\alpha  - \alpha
|\nabla|^{\alpha-2}  \nabla$, we have
\begin{align*}
\big[\,|\nabla|^\alpha, M\big] = |\nabla|^\alpha x \cdot|\nabla|^{2-\alpha} x - x \cdot|\nabla|^{2-\alpha}x |\nabla|^\alpha = -\alpha(x\cdot \nabla + \nabla \cdot x).
\end{align*}
Hence, for a smooth function $v$ we get
\begin{align*}
\big[v, M\big] &= v x \cdot|\nabla|^{2-\alpha} x - x\cdot|\nabla|^{2-\alpha}x v\\
 &= v|x|^2|\nabla|^{2-\alpha} - (2-\alpha)vx \cdot \nabla |\nabla|^{-\alpha} - |\nabla|^{2-\alpha}|x|^2v - (2-\alpha)|\nabla|^{-\alpha}\nabla \cdot x v\\
 &= \big[\,|x|^2v, |\nabla|^{2-\alpha}\big]  + (\alpha-2)\big(vx\cdot \frac{\nabla}{|\nabla|}|\nabla||\nabla|^{-\alpha} + |\nabla||\nabla|^{-\alpha}\frac{\nabla}{|\nabla|} \cdot x v\big).
\end{align*}
By density argument we may assume that $v = V_\alpha$ in the above
identity. Thus it suffices to show that
\begin{align}
\label{critical-virial}
\begin{aligned}
\|\varphi\|_{L^2}^4&\gtrsim  \big|\big\langle u, \big[|x|^2V_\alpha,
|\nabla|^{2-\alpha}\big] u \big\rangle\big|\,\, +\\& \big|\big\langle u,
\big(V_\alpha x\cdot \frac{\nabla}{|\nabla|}|\nabla|
|\nabla|^{-\alpha} +
|\nabla||\nabla|^{-\alpha}\frac{\nabla}{|\nabla|} \cdot x
V_\alpha\big)u\big\rangle\big|.
\end{aligned}
\end{align}

\noindent{\it Case $\alpha > 2$.} We first consider the higher order
case $\alpha > 2$.   The first term of LHS of
\eqref{critical-virial} is rewritten as
\begin{align}\label{i}
2\big|\mbox{Im}\big\langle u, |x|^2V_\alpha |\nabla|^{2-\alpha}u\big\rangle\big|.
\end{align}
To handle this we recall the following
weighted convolution estimate (see \cite{chozsash, chonak}):
\begin{lemma}\label{weighted con}
Let $0 < \gamma < n-1$ and $n \ge 2$.  Then, for any $f\in
L_{rad}^1$ and $x \neq 0$,
\begin{align}\label{weighted1}
\int |x-y|^{-\gamma}|f(y)|\,dy \lesssim |x|^{-\gamma}\|f\|_{L^1}.
 \end{align}
\end{lemma}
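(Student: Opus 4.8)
The plan is to exploit radiality so as to reduce \eqref{weighted1} to a uniform bound for a one‑parameter family of surface integrals over $S^{n-1}$. Write $f(y)=g(|y|)$ and pass to polar coordinates $y=r\omega$, $r=|y|$, $\omega\in S^{n-1}$, so that
\begin{align*}
\int|x-y|^{-\gamma}|f(y)|\,dy=\int_0^\infty|g(r)|\,r^{n-1}\Big(\int_{S^{n-1}}|x-r\omega|^{-\gamma}\,d\sigma(\omega)\Big)\,dr.
\end{align*}
Since $\|f\|_{L^1}=|S^{n-1}|\int_0^\infty|g(r)|\,r^{n-1}\,dr$, the inequality \eqref{weighted1} will follow at once once we prove that, with $R=|x|$,
\begin{align}\label{sphere-claim}
\int_{S^{n-1}}|x-r\omega|^{-\gamma}\,d\sigma(\omega)\lesssim R^{-\gamma}\qquad\textrm{uniformly in }r>0.
\end{align}
Heuristically \eqref{sphere-claim} says that the Riesz‑type potential at $x$ of the uniform mass on the sphere $\{|y|=r\}$ is dominated by that of a point mass of equal total mass at the origin — a soft analogue of Newton's theorem which, unlike the classical statement, does not require $\gamma=n-2$.

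By rotation invariance I may take $x=Re_1$, and scaling out $R$ gives $\int_{S^{n-1}}|Re_1-r\omega|^{-\gamma}\,d\sigma(\omega)=R^{-\gamma}J(r/R)$, where $J(s):=\int_{S^{n-1}}|e_1-s\omega|^{-\gamma}\,d\sigma(\omega)$. Hence \eqref{sphere-claim} is equivalent to $\sup_{s>0}J(s)<\infty$, which I would establish by separating $s$ near $1$ from $s$ away from $1$. When $s\le\frac12$ or $s\ge2$, the triangle inequality gives $|e_1-s\omega|\ge\frac12\max(1,s)$ for every $\omega\in S^{n-1}$, so $J(s)\lesssim\max(1,s)^{-\gamma}\lesssim1$. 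The range $\frac12\le s\le2$ is where the integrand becomes genuinely large near $\omega=e_1$ (with an honest singularity when $s=1$), and this is the crux.

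For $\frac12\le s\le2$ I would parametrize $S^{n-1}$ by the polar angle about $e_1$ and set $t=1-\langle e_1,\omega\rangle\in[0,2]$; then $|e_1-s\omega|^2=(1-s)^2+2s\,t$, which is comparable to $(1-s)^2+t$ on this $s$‑range, while the surface measure pushed forward to the $t$ variable is comparable to $t^{(n-3)/2}\,dt$ as $t\to0^+$ (and bounded away from $t=0$). Therefore
\begin{align*}
J(s)\lesssim 1+\int_0^{c}\frac{t^{(n-3)/2}}{\big((1-s)^2+t\big)^{\gamma/2}}\,dt\lesssim 1+\int_0^{c}t^{(n-3-\gamma)/2}\,dt,
\end{align*}
and the last integral converges, with a bound independent of $s$, precisely because $(n-3-\gamma)/2>-1$, i.e. $\gamma<n-1$; here $n\ge2$ is what makes this spherical integration meaningful. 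Combining the two ranges yields $\sup_{s>0}J(s)<\infty$, hence \eqref{sphere-claim}, and substituting back into the first displayed identity gives \eqref{weighted1}. The only delicate point is the uniform-in-$s$ convergence of the singular integral as $s\to1$ — exactly the place where the hypothesis $\gamma<n-1$ (and $n\ge 2$) is used; the rest is the triangle inequality and Fubini.
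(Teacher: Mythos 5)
Your argument is correct. Note that the paper itself contains no proof of Lemma \ref{weighted con}: it is quoted from \cite{chozsash, chonak}, so there is no in-text argument to compare against; the proofs in those references likewise rest on estimating the spherical mean of $|x-\cdot|^{-\gamma}$ over the spheres $\{|y|=r\}$, which is exactly your reduction. Your chain is sound: polar coordinates reduce \eqref{weighted1} to the uniform bound $\int_{S^{n-1}}|x-r\omega|^{-\gamma}\,d\sigma(\omega)\lesssim|x|^{-\gamma}$, rotation and scaling reduce that to $\sup_{s>0}J(s)<\infty$, the ranges $s\le\tfrac12$ and $s\ge2$ are handled by the triangle inequality, and on $\tfrac12\le s\le2$ the substitution $t=1-\langle e_1,\omega\rangle$ with $|e_1-s\omega|^2=(1-s)^2+2st\sim(1-s)^2+t$ and pushforward density $\sim\bigl(t(2-t)\bigr)^{(n-3)/2}$ gives a bound independent of $s$ precisely when $(n-3-\gamma)/2>-1$, i.e. $\gamma<n-1$. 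One cosmetic point: for $n=2$ the pushforward density is not bounded away from $t=0$ globally --- it blows up like $(2-t)^{-1/2}$ near $t=2$ --- but there the integrand $\bigl((1-s)^2+t\bigr)^{-\gamma/2}$ is bounded and the total surface measure is finite, so the estimate $J(s)\lesssim1+\int_0^c t^{(n-3-\gamma)/2}\,dt$ is unaffected; you may wish to phrase that step as ``the contribution of $t\ge c$ is at most $c^{-\gamma/2}\sigma(S^{n-1})$'' to avoid the inaccuracy.
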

From Lemma \ref{weighted con} and mass conservation \eqref{i} is bounded by
\begin{align}\label{1st term}
C\|\psi\|_{L^\infty}\|\varphi\|_{L^2}^2\int |u(x)||x|^{-(\alpha - 2)}\int |x-y|^{-(n-\alpha+2 )}|u(y)|\,dydx.
\end{align}
To estimate this, we make use of the following inequality due to
Stein-Weiss \cite{s-w}:
\\
For $f \in L^p$ with $1 < p < \infty$, $0 <
\lambda < n$, $\beta < \frac np$, and $n = \lambda + \beta$
\begin{align}\label{s-w ineq}
\||x|^{-\beta} (|\cdot|^{-\lambda}*f)\|_{L^p} \lesssim \|f\|_{L^p}.
\end{align}
Applying \eqref{s-w ineq} with $p = 2$, $\beta = \alpha - 2$ and $\lambda = n-(\alpha-2)$, \eqref{1st term} is bounded by $C\|\varphi\|_{L^2}^4$.

We write the second term of RHS of \eqref{critical-virial}  as
$$
2\big|\mbox{Im}\big\langle u, V_\alpha x\cdot
\frac{\nabla}{|\nabla|}|\nabla| |\nabla|^{-\alpha} u\big\rangle\big|.
$$
By using Lemma \ref{weighted con} we see that this is bounded by
$$
C\|\psi\|_{L^\infty}\|\varphi\|_{L^2}^2\int
|u(x)||x|^{-(\alpha-1)}\int |x-y|^{-(n-(\alpha-1))}
\big|\big(\frac{\nabla}{|\nabla|}u\big)(y)\big|\,dydx.
$$
Applying \eqref{s-w ineq} with $p =2$, $\beta = \alpha-1$ and
$\lambda = n-(\alpha-1)$, and Plancherel's theorem, we get the
desired bound \eqref{critical-virial}.

\smallskip

 \noindent{\it Case $1 \le \alpha < 2$.} Now we consider the
fractional case $1 \le \alpha < 2$. The second term of RHS of
\eqref{critical-virial} can be treated in the same way as the high
order case, and it is bounded by $C\|\varphi\|_{L^2}^4$. Hence, it
suffices to consider the first term. Let us set $g = |x|^2V_\alpha$.
Then, we need only to obtain
\begin{align}\label{L2-bound}
\|\big[|\nabla|^{2-\alpha}, g\big]u\|_{L^2} \le C \|\varphi\|_{L^2}^3,
\end{align}
which gives $\big|\big\langle u, \big[|x|^2V_\alpha,
|\nabla|^{2-\alpha}\big] u \big\rangle\big|\lesssim
\|\varphi\|_{L^2}^4$, and thus \eqref{critical-virial}.  The kernel
$K(x, y)$ of the commutator $\big[|\nabla|^{2-\alpha}, g\big]$ can be
written as $k(x-y)(g(y)-g(x))$, where $k$ is the kernel of
pseudo-differential operator $|\nabla |^{2-\alpha}$. Let $K^\ast$ be
the kernel of the dual operator of $\big[|\nabla|^{2-\alpha}, g\big]$. Then,
obviously $K^*(x, y) = -K(x, y)$.

Suppose that $\|g\|_{\dot \Lambda^{2-\alpha}} = \sup_{x \neq y \in
\mathbb R^n}\frac{|g(x) - g(y)|}{|x-y|^{2-\alpha}} < \infty$. Since
$|k(x-y)| \lesssim |x-y|^{-n-(2-\alpha)}$, $|\nabla k(x-y)|\lesssim
|x-y|^{-n-1-(2-\alpha)}$, and $0<2-\alpha\le 1$,  it is easy to see
the following:
\begin{align*}
|K(x, y)| &\lesssim |x-y|^{-n},\\
|K(x, y) - K(x',y)| &\lesssim \frac{|x-x'|^{2-\alpha}}{|x-y|^{n+2-\alpha}},\;\;\mbox{if}\;\;|x-x'| \le |x-y|/2,\\
|K(x, y) - K(x,y')| &\lesssim \frac{|y-y'|^{2-\alpha}}{|x-y|^{n+2-\alpha}},\;\;\mbox{if}\;\;|y-y'| \le |x-y|/2,
\end{align*}
and so does $K^*$ (because $K^*(x, y) = -K(x, y)$). Let $\zeta$ be a
normalized bump function supported in the unit ball and set
$\zeta^{x_0, N}(x) = \zeta((x-x_0)/N)$.
 By Theorem
3 in p. 294 of \cite{st}, in order to prove \eqref{L2-bound}, it is
sufficient to show that
\begin{align}\label{rest-bounded}
\|\big[|\nabla|^{2-\alpha}, g\big](\zeta^{x_0, N})\|_{L^2} \le C \|\varphi\|_{L^2}^2 N^\frac n2
\end{align}
with $C$, independent of $x_0, N, \zeta$.

We now show \eqref{rest-bounded}. The commutator
$\big[|\nabla|^{2-\alpha}, g\big]$ can be written as
\begin{align}\label{sum}
\sum_{j = 1}^n \big[T_j, g\big]\partial_j + \sum_{j = 1}^n T_j (\partial_j g),
\end{align}
where $T_j = - |\nabla|^{2-\alpha}(-\Delta)^{-1}\partial_j$. For the first sum of \eqref{sum} we obtain
\begin{align}\label{cz-est}
\|\big[T_j, g\big]\partial_j (\zeta^{x_0, N})\|_{L^2} \le
C\|\varphi\|_{L^2}^2N^\frac n2.
\end{align}
Indeed, let $k_j$ be the kernel of $T_j$. If $\alpha = 1$, $k_j$ is
the kernel of Riesz transform. If $1 < \alpha < 2$, it is easy to
see $|k_j(x, y)| \lesssim |x-y|^{-n + \alpha-1}$ (note that
$|\widehat{k_j}(\xi)| \lesssim |\xi|^{-(\alpha-1)}$). Thus it
follows that
$$
|K_j(x, y)| = |k_j(x-y)||g(y) - g(x)| \lesssim \|g\|_{\dot\Lambda^{2-\alpha}}|x-y|^{-(n-1)}.
$$
Hence, for $|x-x_0| < 2N$, $ |\big[T_i, g\big]\partial_i (\zeta^{x_0,N})(x)|
\lesssim \|g\|_{\dot\Lambda^{2-\alpha}}. $ This gives \[\|\big[T_i,
g\big]\partial_i (\zeta^{x_0, N})\|_{L^2(\{|x-x_0| < 2N\})} \lesssim
\|g\|_{\dot\Lambda^{2-\alpha}}N^\frac n2.\] If $|x-x_0| \ge 2N$, we
have $ |\big[T_i, g\big]\partial_i (\zeta^{x_0, N})(x)| \lesssim
\|g\|_{\dot\Lambda^{2-\alpha}}N^{n-1}|x-x_0|^{-(n-1)}. $ Hence,
\begin{align*}
\|\big[T_i, g\big]\partial_i (\zeta^{x_0, N})\|_{L^2(\{|x-x_0| \ge 2N\})} &\lesssim
 \|g\|_{\dot\Lambda^{2-\alpha}}N^{n-1}\big(\int_{|x| > 2N} |x|^{-2(n-1)}\,dx\big)^\frac12 \\
&\lesssim \|g\|_{\dot\Lambda^{2-\alpha}}N^\frac n2.
\end{align*}
We now show $\|g\|_{\dot\Lambda^{2-\alpha}} \le C
\|\varphi\|_{L^2}^2 $, which gives \eqref{cz-est}. If $x \neq y$,
then
$$
|g(x) - g(y)| \le |x-y|\int_0^1 |\nabla g (z_s)|\,ds, \quad z_s = x + s(y-x).
$$
Since $|\psi'(\rho)| \le C\rho^{-1}$ for $\rho > 0$, from Lemma \ref{weighted con} and mass conservation it follows that
$$
 |\nabla g (z_s)| \lesssim |z_s|^{1-\alpha}\|u\|_{L^2}^2 = ||x|-s|x-y||^{1-\alpha}\|\varphi\|_{L^2}^2,
$$
provided $\alpha < n-2$. Since $\sup_{a > 0}\int_0^1|a -
s|^{-\theta}\,ds \le C_\theta$ for $0 < \theta < 1$, we have $ |g(x)
- g(y)| \lesssim |x-y|^{2-\alpha}\|\varphi\|_{L^2}^2 $. Thus we get \eqref{cz-est}.

Finally we need to handle the second sum of \eqref{sum}. If $\alpha
= 1$, $T_j$ is a Riesz transform. Thus,
\begin{align*}
\| T_j ((\partial_jg) \zeta^{x_0, N}) \|_{L^2} \le C\|\partial_j g\|_{L^\infty}N^\frac n2.
\end{align*}
By Lemma \ref{weighted con} for $\alpha =1$, we get $|\partial_j
g(x)| \le |x|V_1 + |x|^2|\partial_jV_1| \lesssim
\|\varphi\|_{L^2}^2$. Hence,
 \begin{align}\label{rest-bounded2}
\| T_j ((\partial_jg) \zeta^{x_0, N}) \|_{L^2} \le C\|\varphi\|_{L^2}^2N^\frac n2.
\end{align}
For $1 < \alpha < 2$, the kernel $k_j(x)$ of $T_j$ is bounded by
$C|x|^{-(n-\alpha+1)}$. So, from the duality and Lemma \ref{weighted
con} with $\alpha < n-2$ we have, for any $\psi \in L^2$,
\begin{align*}
\big|\big\langle \psi, T_j ((\partial_jg) \zeta^{x_0, N}) \big\rangle\big| &= \big|\big\langle T_j^* \psi, (\partial_j g) \zeta^{x_0, N}\big\rangle\big|\\
&\le CN^\frac n2\||\partial_j g(\cdot)|\int |\cdot-y|^{-(n-\alpha+1)}|\psi(y)|\,dy\|_{L^2}\\
&\le CN^\frac n2\|\varphi\|_{L^2}^2\||\cdot|^{1-\alpha}|\int |\cdot-y|^{-(n-\alpha+1)}|\psi(y)|\,dy\|_{L^2},
\end{align*}
where $T_j^*$ is the dual operator of $T_j$. Using  \eqref{s-w ineq}
with $\beta = \alpha-1$, $\lambda = n-\alpha+1$ and $ p =2$,
we get
\begin{align*}
\big|\big\langle \psi, T_j (\partial_j g \zeta^{x_0, N})
\big\rangle\big| \le C\|\psi\|_{L^2}\|\varphi\|_{L^2}^2N^\frac n2.
\end{align*}
Thus, it follows that
\begin{align}\label{comm-error}
\|T_j (\partial_j g \zeta^{x_0, N})\|_{L^2} \le C\|\varphi\|_{L^2}^2N^\frac n2.
\end{align}
Therefore combining the estimates \eqref{cz-est}
\eqref{rest-bounded2} and \eqref{comm-error} yields
\eqref{rest-bounded}. This completes the proof of Lemma
\ref{virial-lemma3}.
\end{proof}

\section{Propagation of the moment}\label{prop of mom}

We now discuss estimates for the propagation of  moments
$\||x|^{2k}u\|_{L^2}$ when $|x|^{2k}\varphi \in L^2$ and the
solution $u \in C([0,T^*); H^{\alpha^*})$. For $\alpha < 2k$, we use
the kernel estimate of Bessel potentials. Let us denote,
respectively, the kernels of Bessel potential $D^{-\beta}$ and
$|\nabla|^\alpha D^{-2k}$ $(\beta = \alpha - 2k)$ by $G_\beta(x)$
and $K(x)$, where $D = \sqrt{1 - \Delta}$. Then
$$
K(x) = \sum_{j=0}^\infty A_j G_{2j+\beta}(x),
$$
where the coefficients $A_j$ are given by the
expansion $(1-t)^\frac\alpha2 = \sum_{j = 0}^\infty A_j t^j$ for
$|t| < 1$ with $\sum_{j \ge 0}|A_j| < \infty$. One can show that
$(1+ |x|)^\ell K \in L^1$ for $\ell \ge 1$ and has
decreasing radial and integrable majorant. In fact, from the
integral representation
$$
G_{2j+\beta}(x) = \frac1{(4\pi)^{n/2}\Gamma(j+\beta/2)}\int_0^\infty
\lambda^{(2j+\beta-n)/2-1} e^{-|x|^2/4\lambda} e^{ -
\lambda}\,d\lambda,
$$
it follows that,  for $j$ with $2j+\beta < n$,
\begin{align}\label{bessel1}
G_{2j+\beta}(x) \le C( |x|^{-n + 2j + \beta}\chi_{\{|x| \le 1\}}(x) +
e^{-c|x|}\chi_{\{|x| > 1\}}(x)),
\end{align}
and, for $j$ with $2j+\beta \ge n$,
\begin{align}\label{bessel2}
G_{2j+\beta}(x) \le C(\chi_{\{|x| \le 1\}}(x) + e^{-c|x|}\chi_{\{|x|
> 1\}}(x)).
\end{align}
Here the constants $c$ and $C$ of \eqref{bessel1} and
\eqref{bessel2} are independent of $j$. So, the function
$(1+|x|)^\ell G_{2j + \beta}$ has a decreasing radial and integrable
majorant, which is chosen uniformly on $j$, and so does $K$. For
details see p.132--135 of \cite{st-sing}.

\begin{proposition}\label{mom prop}
Let\, $T^*$ be the maximal time of solution $u \in
C([0,T^*);H^{\alpha^*})$ to \eqref{main eqn}. If\, $|x|^{\ell}\partial^\mathfrak j\varphi
\in L^2(\mathbb{R})$ for $1 \le \ell \le 2k, 0 \le |\mathfrak j| \le 2k(2k-\ell)$, then $|x|^{\ell}\partial^\mathfrak j u(t) \in L^2(\mathbb{R})$ for all $t \in [0, T^*)$.
\end{proposition}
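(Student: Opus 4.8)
The plan is to propagate the weighted bounds by a differential-inequality argument combined with an approximation (regularization) scheme, treating the weights $|x|^\ell$ for $\ell = 1, 2, \dots, 2k$ successively and, for each $\ell$, the derivative orders $|\mathfrak j|$ from high to low (or all at once via an energy functional). First I would set up, for smooth decaying data, the quantity $N_\ell(t) = \sum_{|\mathfrak j| \le 2k(2k-\ell)} \||x|^\ell \partial^{\mathfrak j} u(t)\|_{L^2}^2$ and compute $\frac{d}{dt} N_\ell(t)$ using the equation $i\partial_t u = |\nabla|^\alpha u + F(u)$. Since the weight $|x|^\ell$ does not commute with $|\nabla|^\alpha$, the time derivative produces commutator terms $[\,|\nabla|^\alpha, |x|^\ell \partial^{\mathfrak j}\,]u$ and, from the Hartree term, $[\,V_\gamma, |x|^\ell \partial^{\mathfrak j}\,]u$; the strategy is to bound these by $C(\|u\|_{H^{\alpha^*}})\,(N_\ell(t) + N_{\ell-1}(t) + 1)$, i.e. by lower-order weighted norms plus Sobolev norms that are already controlled on $[0,T^*)$ because $u \in C([0,T^*);H^{\alpha^*})$. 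A Gronwall argument then keeps $N_\ell(t)$ finite up to $T^*$. The chosen relation $|\mathfrak j| \le 2k(2k-\ell)$ is exactly what is needed so that, as one commutes the weight past $|\nabla|^\alpha$ (which after the integer-power expansion $D^{2k}$ costs at most $2k$ derivatives per commutator and must be iterated $\ell$ times to eliminate the weight) the resulting error terms involve $\partial^{\mathfrak j'} u$ with $|\mathfrak j'|$ still within the $H^{\alpha^*} = H^{(2k)^2}$ range.

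The technical heart is the commutator with $|\nabla|^\alpha$. Since $\alpha$ is not an even integer, $[\,|\nabla|^\alpha, |x|^\ell\,]$ is a nonlocal singular operator, so I would first write $|\nabla|^\alpha = |\nabla|^\alpha D^{-2k} \cdot D^{2k}$ and use the kernel representation $K(x) = \sum_j A_j G_{2j+\beta}(x)$ from the paragraph preceding the proposition: $(1+|x|)^\ell K \in L^1$ with a decreasing integrable majorant uniform in $j$. This lets me trade each factor of $|x|$ in the weight against a factor of $(1+|x|)$ absorbed into the kernel of $|\nabla|^\alpha D^{-2k}$, reducing matters to commutators of $D^{2k}$ — a genuine differential operator — with $|x|^\ell \partial^{\mathfrak j}$, which are handled by the Leibniz rule and produce only terms with strictly fewer weights and boundedly many extra derivatives. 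The Hartree commutator $[\,V_\gamma, |x|^\ell \partial^{\mathfrak j}\,]$ is milder: $V_\gamma$ is a (real) multiplier by a function whose derivatives decay, so by Lemma \ref{weighted con} and mass conservation one gets $|\partial^{\mathfrak m} V_\gamma(x)| \lesssim (1+|x|)^{-\gamma - |\mathfrak m|}\|\varphi\|_{L^2}^2$ and the commutator terms again involve only lower weights times Sobolev-controlled derivatives.

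The remaining ingredient is the rigorous justification of the formal computation of $\frac{d}{dt}N_\ell(t)$. I would do this by a standard regularization: replace $u$ by $u_\epsilon = \chi(\epsilon|\nabla|)u$ (or use a truncated weight $|x|^\ell \wedge \epsilon^{-1}$), for which all quantities are manifestly $C^1$ in $t$ and all integrations by parts are legitimate, derive the differential inequality with constants uniform in $\epsilon$, run Gronwall, and then pass $\epsilon \to 0$ using the already-established $H^{\alpha^*}$ regularity and the assumed weighted bounds on the data. Finally I would carry out the induction on $\ell$: the base case $\ell = 0$ is just $u \in C([0,T^*);H^{\alpha^*})$, and at stage $\ell$ the Gronwall estimate uses the finiteness of $N_{\ell-1}$ from the previous stage. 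I expect the main obstacle to be bookkeeping the derivative count precisely — verifying that every error term generated by iterating the $|x|^\ell$-commutator through $D^{2k}$ really does stay within $|\mathfrak j'| \le (2k)^2$ derivatives so that the $H^{\alpha^*}$ norm suffices to close the estimate; this is exactly why the hypothesis couples $|\mathfrak j| \le 2k(2k-\ell)$ to $\ell$, and getting the inequality to be consistent across the induction is the delicate point rather than any single estimate.
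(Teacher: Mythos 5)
Your plan follows essentially the same route as the paper's proof: induction on the weight power $\ell$ (the paper's bootstrapping Lemma \ref{boot}), a regularization of the weight (the paper uses $\psi_\varepsilon=e^{-\varepsilon|x|^2}$ rather than a frequency cutoff), the factorization $|\nabla|^\alpha=(|\nabla|^\alpha D^{-2k})D^{2k}$ with the weighted kernel bound $(1+|x|)^\ell K\in L^1$ to absorb the weight, the Leibniz rule for the differential part $D^{2k}$, and a Gronwall-type integral inequality followed by Fatou as $\varepsilon\to0$. One small caveat: your claimed pointwise bound $|\partial^{\mathfrak m}V_\gamma(x)|\lesssim(1+|x|)^{-\gamma-|\mathfrak m|}\|\varphi\|_{L^2}^2$ is not correct for higher $|\mathfrak m|$ (derivatives must partly fall on $|u|^2$, since differentiating the kernel destroys local integrability), but the paper's actual treatment of this term via Young's inequality and Hardy--Sobolev, with the derivatives of $u$ controlled by $H^{\alpha^*}$ and the lower-weight norms, slots directly into your scheme, so the overall architecture is unaffected.
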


\newcommand{\mm}{\mathbf{m}}
\newcommand{\wmm}{\widetilde{\mathbf{m}}}
\newcommand{\pe}{\psi_{\varepsilon}}

Let us set $\pe(x) = e^{-\varepsilon|x|^2}$. For proof of Proposition \ref{mom prop} we use the
following bootstrapping lemma.

\begin{lemma}\label{boot}
Let $\ell, m$ be integers such that  $2 \le \ell \le 2k$ and $0 \le
m \le \alpha^* - 2k$. Suppose that $\sup_{0 \le t' \le
t}(\|u(t')\|_{H^{2k+m}} + \||x|^{j}\partial^\mathfrak j
u(t')\|_{L^2}) < \infty$ for all $t \in [0, T^*)$ and $0 \le j \le
\ell-1, |\mathfrak j| \le 2k+m$. Then $\sup_{0\le t' \le
t}\||x|^\ell \partial^\mathfrak m u(t')\|_{L^2} < \infty$  for all
$t \in [0, T^*)$ and $|\mathfrak m| = m$.
\end{lemma}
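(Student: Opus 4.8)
The plan is to prove the assertion by a regularized weighted energy estimate for $v:=\partial^{\mathfrak m}u$, then pass to the limit. Fix $\varepsilon>0$ and set $\omega_\varepsilon:=\pe|x|^\ell$, which is bounded for each $\varepsilon$; replacing $|x|^\ell$ by a smooth function equal to it for $|x|\ge 1$ changes all quantities below by terms $\lesssim\|\partial^{\mathfrak m}u\|_{L^2}$, and one then has the key bound $|\partial^{\mathfrak b}\omega_\varepsilon(x)|\lesssim\langle x\rangle^{\ell-|\mathfrak b|}$ \emph{uniformly in $\varepsilon$}, because $(\varepsilon|x|^2)^j e^{-\varepsilon|x|^2}\le(j/e)^j$. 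Since $u\in C^1([0,T^*);H^{\alpha^*-\alpha})$ and $\alpha+m\le 2k+m\le\alpha^*$, the map $t\mapsto\|w_\varepsilon(t)\|_{L^2}^2$ with $w_\varepsilon:=\omega_\varepsilon v$ is $C^1$ on $[0,T^*)$, and \eqref{main eqn} gives
\begin{align*}
\frac{d}{dt}\|w_\varepsilon\|_{L^2}^2=2\,\mathrm{Im}\big\langle\omega_\varepsilon\big(|\nabla|^\alpha v+\partial^{\mathfrak m}F(u)\big),w_\varepsilon\big\rangle .
\end{align*}
Writing $\omega_\varepsilon|\nabla|^\alpha v=|\nabla|^\alpha w_\varepsilon+[\omega_\varepsilon,|\nabla|^\alpha]v$ and using that $|\nabla|^\alpha$ is self-adjoint (so $\mathrm{Im}\langle|\nabla|^\alpha w_\varepsilon,w_\varepsilon\rangle=0$), together with $\partial^{\mathfrak m}F(u)=-V_\alpha v-\sum_{1\le|\mathfrak m'|\le m}\binom{\mathfrak m}{\mathfrak m'}(\partial^{\mathfrak m'}V_\alpha)(\partial^{\mathfrak m-\mathfrak m'}u)$ and the fact that $V_\alpha$ is real (so $-V_\alpha v$ contributes nothing to the imaginary part), the right-hand side reduces to $2\,\mathrm{Im}\langle[\omega_\varepsilon,|\nabla|^\alpha]v,w_\varepsilon\rangle$ plus the lower-order Leibniz terms.

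The commutator $[\omega_\varepsilon,|\nabla|^\alpha]$ is the main point, since for non-even $\alpha$ the operator $|\nabla|^\alpha$ is nonlocal. I would use the factorization $|\nabla|^\alpha=P\circ D^{2k}$ with $P:=|\nabla|^\alpha D^{-2k}$, whose kernel $K$ — as recorded just before Proposition \ref{mom prop} — satisfies $(1+|x|)^{\ell'}K\in L^1$ for all $\ell'\ge0$ and decays exponentially, so that $P$ is bounded on $L^2$ and all polynomial moments of $K$ are finite. Then $[\omega_\varepsilon,|\nabla|^\alpha]=[\omega_\varepsilon,P]D^{2k}+P[\omega_\varepsilon,D^{2k}]$. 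For the second piece, $[\omega_\varepsilon,D^{2k}]v$ is a differential operator of order $\le 2k-1$ applied to $v$ with coefficients $\partial^{\mathfrak b}\omega_\varepsilon$ ($|\mathfrak b|\ge1$), hence a finite sum of terms $(\partial^{\mathfrak b}\omega_\varepsilon)\,\partial^{\mathfrak d+\mathfrak m}u$ with $|\partial^{\mathfrak b}\omega_\varepsilon|\lesssim\langle x\rangle^{\ell-1}$ and $|\mathfrak d+\mathfrak m|\le 2k-1+m\le 2k+m$; its $L^2$ norm is therefore bounded, uniformly in $\varepsilon$, by the hypothesized quantities $\||x|^{\ell-1}\partial^{\mathfrak j}u\|_{L^2}$ ($|\mathfrak j|\le 2k+m$) and $\|u\|_{H^{2k+m}}$, and applying $P$ and pairing with $w_\varepsilon$ gives a contribution $\le C(t)\|w_\varepsilon\|_{L^2}$. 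For the first piece, the kernel of $[\omega_\varepsilon,P]$ is $(\omega_\varepsilon(x)-\omega_\varepsilon(y))K(x-y)$, and the mean value theorem with the uniform gradient bound gives $|\omega_\varepsilon(x)-\omega_\varepsilon(y)|\lesssim|x-y|(\langle x\rangle^{\ell-1}+\langle y\rangle^{\ell-1})$, so $|[\omega_\varepsilon,P]D^{2k}v|$ is dominated by $\langle x\rangle^{\ell-1}(K_1*|D^{2k}v|)+(K_1*(\langle\cdot\rangle^{\ell-1}|D^{2k}v|))$ with $K_1(z)=|z||K(z)|\in L^1$; pairing with $w_\varepsilon$ and transferring the surviving power $\langle\cdot\rangle^{\ell-1}$ across $K_1$ (its moments being finite) bounds this by $C(t)\|w_\varepsilon\|_{L^2}$, where $C(t)$ again involves only $\|u\|_{H^{2k+m}}$ and $\||x|^{\ell-1}\partial^{\mathfrak j}u\|_{L^2}$ for $|\mathfrak j|\le 2k+m$ and is independent of $\varepsilon$.

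The lower-order Leibniz terms $\omega_\varepsilon(\partial^{\mathfrak m'}V_\alpha)(\partial^{\mathfrak m-\mathfrak m'}u)$, $1\le|\mathfrak m'|\le m$, are handled by moving one power of the weight onto the potential: $\||x|\,\partial^{\mathfrak m'}V_\alpha\|_{L^\infty}$ is bounded uniformly on $[0,t]$ by mass conservation, $\psi\in L^\infty$, the radial convolution estimate of Lemma \ref{weighted con} (after moving the remaining derivatives onto the radial function $|u|^2$) and the Stein--Weiss inequality \eqref{s-w ineq}, while the residual factor $\pe|x|^{\ell-1}\partial^{\mathfrak m-\mathfrak m'}u$ has weight $\ell-1$ and derivative order $|\mathfrak m-\mathfrak m'|\le m-1\le 2k+m$, hence is controlled by hypothesis; these terms therefore contribute $\le C(t)\|w_\varepsilon\|_{L^2}$ as well.

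Collecting everything gives $\frac{d}{dt}\|w_\varepsilon(t)\|_{L^2}^2\le C(t)\big(1+\|w_\varepsilon(t)\|_{L^2}^2\big)$ on $[0,t]$ with $C(t)$ locally bounded on $[0,T^*)$ and independent of $\varepsilon$. Since $\|w_\varepsilon(0)\|_{L^2}\lesssim\||x|^\ell\partial^{\mathfrak m}\varphi\|_{L^2}+\|\partial^{\mathfrak m}\varphi\|_{L^2}<\infty$ (the datum $|x|^\ell\partial^{\mathfrak m}\varphi\in L^2$ being available when the lemma is used in the proof of Proposition \ref{mom prop}), Gronwall's inequality bounds $\sup_{0\le t'\le t}\|w_\varepsilon(t')\|_{L^2}$ uniformly in $\varepsilon$, and letting $\varepsilon\downarrow0$ with monotone convergence ($\pe|x|^\ell\uparrow|x|^\ell$) yields $\sup_{0\le t'\le t}\||x|^\ell\partial^{\mathfrak m}u(t')\|_{L^2}<\infty$ for every $t\in[0,T^*)$, as claimed. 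The delicate part is the commutator step: treating the nonlocality by the factorization through $D^{2k}$ and the weighted $L^1$ bounds on $K$ (hence the $H^{\alpha^*}$ regularity), keeping every estimate uniform in the regularization, and observing that only weight at most $\ell-1$ together with derivative order up to $2k+m$ is ever needed — which is exactly the budget supplied by the hypothesis.
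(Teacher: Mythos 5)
Your overall architecture is exactly the paper's: regularize the weight with $\pe=e^{-\varepsilon|x|^2}$, differentiate $\|\pe|x|^\ell\partial^{\mathfrak m}u\|_{L^2}^2$ in time, kill the self-adjoint term, split the commutator with $|\nabla|^\alpha$ through the factorization $|\nabla|^\alpha=(|\nabla|^\alpha D^{-2k})D^{2k}$ so that the nonlocal part is handled by the weighted $L^1$ bounds on the kernel $K$ (the paper's $I_1$) and the local part $[\,\cdot\,,D^{2k}]$ by the hypothesized weighted norms (the paper's $I_2$), then Gronwall and Fatou/monotone convergence as $\varepsilon\to0$. Those parts of your argument are sound, and smoothing $|x|^\ell$ near the origin to get $|\partial^{\mathfrak b}\omega_\varepsilon|\lesssim\langle x\rangle^{\ell-|\mathfrak b|}$ uniformly in $\varepsilon$ is a legitimate (slightly cleaner) substitute for the paper's use of the Hardy--Sobolev inequality on the singular-weight terms.

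The genuine gap is in your treatment of the Leibniz terms $\omega_\varepsilon(\partial^{\mathfrak m'}V_\alpha)(\partial^{\mathfrak m-\mathfrak m'}u)$, i.e.\ the claim that $\||x|\partial^{\mathfrak m'}V_\alpha\|_{L^\infty}$ is controlled ``by mass conservation, Lemma \ref{weighted con} and the Stein--Weiss inequality.'' Neither tool does the job as you invoke it: Lemma \ref{weighted con} requires a \emph{radial} $L^1$ integrand, whereas $\partial^{\mathfrak m'}(|u|^2)$ is not radial for $|\mathfrak m'|\ge1$ (and the lemma being proved does not assume radial $u$ at all); moreover, even granting it, one would only obtain $|x|\,|\partial^{\mathfrak m'}V_\alpha(x)|\lesssim|x|^{1-\alpha}\|\partial^{\mathfrak m'}(|u|^2)\|_{L^1}$, which is unbounded near the origin when $\alpha>1$, so no $L^\infty$ bound follows; and the Stein--Weiss inequality \eqref{s-w ineq} is an $L^p$-weighted estimate, not a sup bound, and it is used in the paper only in the proof of Lemma \ref{virial-lemma3}, not here. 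The bound you want is true, but the correct route (the paper's) is different: write $|x|\le|x-y|+|y|$ inside the convolution, use Cauchy--Schwarz on the product of derivatives of $u$ to reduce to $\int|x-y|^{-(\alpha-1)}|\partial^{\mathfrak j}u(y)|^2dy$ and $\int|x-y|^{-\alpha}\big(|y||\partial^{\mathfrak j}u(y)|\big)^2dy$, and bound these uniformly in $x$ by the Hardy--Sobolev inequality; this yields $\||x|\partial^{\mathfrak m'}V_\alpha\|_{L^\infty}\lesssim\sum_{|\mathfrak j|\le k+m}\|(1+|x|)\partial^{\mathfrak j}u\|_{L^2}^2$, which is within the hypothesis since $\ell-1\ge1$. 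With that sub-step repaired, your proof closes as you describe.
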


\begin{proof}
Let $v = \partial^\mathfrak m u$ and
$$
\mm_\varepsilon(t) = \big\langle v(t), |x|^{2\ell} \pe^2
v(t) \big\rangle.
$$
From the regularity of the solution $u$ it follows that
\begin{align*}
\mm_\varepsilon'(t) = 2{\rm Im} \big\langle v, \big[|\nabla |^\alpha, |x|^{2\ell} \pe^2\big] v\big\rangle + 2{\rm Im} \big\langle |x|^\ell \pe v, |x|^\ell \pe \partial^\mathfrak m (V_\alpha u)\big\rangle =: 2(\,I + I\!I\,).
\end{align*}
We first prove the case, $\alpha < 2k$. We rewrite $I$ as
\begin{align*}
I &= {\rm Im }\big\langle |x|^\ell \pe v, \big[|\nabla|^\alpha D^{-2k}, |x|^\ell \pe\big] D^{2k}u\big\rangle\\
 &\qquad + {\rm Im}\big\langle |\nabla|^\alpha D^{-2k}(|x|^\ell \pe v), \big[D^{2k}, |x|^\ell \pe\big]v\,\big\rangle =: I_1 + I_2.
\end{align*}
By the kernel representation of $|\nabla|^\alpha D^{-2k}$, we  have
\begin{align*}
\big|\big[|&\nabla D^{-2k}, |x|^\ell \pe|\big]D^{2k}u (x)\big|\\
 &\le \int K(x-y)||x|^\ell \pe(x)- |y|^\ell \pe(y)||D^{2k}u(y)|\,dy\\
&\lesssim \int K(x-y)|x-y|(|x|^{\ell-1} + |y|^{\ell-1}) |D^{2k}u(y)|\,dy\\
&\lesssim \int K(x-y)|x-y|^{\ell}|D^{2k}u(y)|\,dy +  \int K(x-y)|x-y||y|^{\ell-1}|D^{2k}u(y)|\,dy.
\end{align*}
Since $|x|^\ell K$ is integrable, Cauchy-Schwarz inequality gives
$$
I_1 \lesssim \sqrt{\mm_\varepsilon}\,(\|u\|_{H^{2k}} +
\||x|^{\ell-1}D^{2k}u\|_{L^2}).
$$
As for $I_2$
we have
\begin{align*}
I_2 &= \sum_{1 \le j \le k}c_j {\rm Im}\big\langle |\nabla|^\alpha D^{-2k}(|x|^\ell \pe v),
 \big[\Delta^j, |x|^\ell \pe \big]v\,\big\rangle\\
&= \sum_{1 \le j \le k}c_j{\rm Im}\big\langle |\nabla|^\alpha D^{-2k}(|x|^\ell \pe v),
\sum_{\substack{ |\mathfrak j_1|+ |\mathfrak j_2| + |\mathfrak j_3| = 2j\\ 0
\le |\mathfrak j_3| \le 2j-1}}c_{\mathfrak j_1, \mathfrak j_2, \mathfrak j_3}\partial^{\mathfrak j_1}(|x|^\ell)
\partial^{\mathfrak j_2} \pe \partial^{\mathfrak j_3}
v\,\big\rangle.
\end{align*}
Note  that $|\partial^{\mathfrak j_1}(|x|^\ell)| \lesssim |x|^{\ell
- |\mathfrak j_1|}$ and $|\partial^{\mathfrak j_2} \pe (x)| \lesssim
\varepsilon^\frac{|\mathfrak j_2|}2(1 +
\varepsilon|x|^2)^\frac{|\mathfrak j_2|}2\pe(x)$. Hence, it follows
that
\begin{align*}
I_2 &\lesssim \||\nabla|^\alpha D^{-2k}(|x|^\ell \pe v)\|_{L^2}\sum_{1 \le j \le k}\big(\sum_{\substack{ |\mathfrak j_1|+ |\mathfrak j_2| + |\mathfrak j_3| = 2j\\ 0 \le |\mathfrak j_3| \le j-\ell}} + \sum_{\substack{ |\mathfrak j_1|+ |\mathfrak j_2| + |\mathfrak j_3| = j\\ j-\ell \le |\mathfrak j_3| \le 2j-1}}\big)\||x|^{\ell - |\mathfrak j_1|- |\mathfrak j_2|}\partial^{\mathfrak j_3}v\|_{L^2}\\
&\lesssim \sqrt{\mm_\varepsilon}\sum_{1 \le j \le k}\big(\sum_{\substack{ |\mathfrak j_1|+ |\mathfrak j_2| + |\mathfrak j_3| = 2j\\ 0 \le |\mathfrak j_3| \le j-\ell}} + \sum_{\substack{ |\mathfrak j_1|+ |\mathfrak j_2| + |\mathfrak j_3| = 2j\\ j-\ell \le |\mathfrak j_3| \le 2j-1}}\big)\||x|^{|\mathfrak j_3|-(j-\ell)}\partial^{\mathfrak j_3}v\|_{L^2}.
\end{align*}
Here we use the fact that the kernel of $|\nabla|^\alpha D^{-2k}$ is
integrable. Conventionally, the summand  is zero if
$j-\ell < 0$. By the Hardy-Sobolev inequality we get, for $0 \le
|\mathfrak j_3| \le j-\ell$,
$$\||x|^{|\mathfrak j_3|-(j-\ell)}\partial^{\mathfrak j_3}v\|_{L^2} \lesssim \|\partial^{\mathfrak j_3}v\|_{H^{j-\ell-|\mathfrak j_3|}} \lesssim \|v\|_{H^{j-\ell}} \lesssim \|u\|_{H^{j-\ell+m}}.$$
If $j-\ell \le |\mathfrak j_3| \le 2j-1$, then
$$
\||x|^{|\mathfrak j_3|-(j-\ell)}\partial^{\mathfrak j_3}v\|_{L^2} = \||x|^{|\mathfrak j_3|-(j-\ell)} \partial^{\mathfrak j_3 + \mathfrak m}u\|_{L^2}.
$$
Thus we finally obtain
\begin{align}\label{appr1}
I \lesssim \sqrt{\mm_\varepsilon(t)}\Big(\|u(t)\|_{H^{2k+m}} + \sum_{0 \le |\mathfrak j| \le 2k+m}\|(1 + |x|)^{\ell-1}\partial^\mathfrak j u(t)\|_{L^2}\Big).
\end{align}
In the case $\alpha = 2k$, we do not need the estimate for $I_1$. For the estimate of $I_2 = {\rm Im}\big\langle |x|^\ell \pe v, \big[\Delta^k, |x|^\ell
\pe\big]v \big\rangle$, we estimate similarly to obtain \eqref{appr1}.

Now we proceed to estimate $I\!I$. For this let us observe that
\begin{align*}
I\!I &= \sum_{\substack{\mathfrak m_1 + \mathfrak m_2 = \mathfrak m \\ 0 \le |\mathfrak m_2| \le m-1}} c_{\mathfrak m_1, \mathfrak m_2} {\rm Im}\big\langle |x|^\ell \pe v, |x|^\ell \pe \partial^{\mathfrak m_1} V_\alpha \partial^{\mathfrak m_2} u\big\rangle\\
&\lesssim \sqrt{\mm_\varepsilon}\,\sum_{\substack{\mathfrak m_1 + \mathfrak m_2 = \mathfrak m \\ 0 \le |\mathfrak m_2| \le m-1}}
\||x|\partial^{\mathfrak m_1}V_\alpha\|_{L^\infty}\||x|^{\ell-1}|\partial^{\mathfrak m_2} u\|_{L^2}.
\end{align*}
By Young's inequality we estimate
\begin{align*}
|x||\partial^{\mathfrak m_1}V_\alpha| &\lesssim \sum_{\mathfrak m_1^1 + \mathfrak m_1^2= \mathfrak m_1}\int |x-y|^{-\alpha}(|x-y| + |y|)|\partial^{\mathfrak m_1^1}u(y)||\partial^{\mathfrak m_1^2}u(y)|\,dy\\
&\lesssim \sum_{\mathfrak m_1^1 + \mathfrak m_1^2= \mathfrak m_1}\left( \int |x-y|^{-(\alpha-1)}(|\partial^{\mathfrak m_1^1}u(y)|^2 + |\partial^{\mathfrak m_1^2}u(y)|^2)\,dy \right.\\
&\qquad\qquad\qquad \left. + \int |x-y|^{-\alpha}(|y|^2|\partial^{\mathfrak m_1^1}u(y)|^2 + |\partial^{\mathfrak m_1^2}u(y)|^2)\,dy \right).
\end{align*}
Using the Hardy-Sobolev inequality, we get
\begin{align}\label{appr2}
I\!I \lesssim \sqrt{\mm_\varepsilon}\,\sum_{0 \le |\mathfrak j| \le k + m}\|(1+|x|)^{\ell-1}\partial^\mathfrak j u\|_{L^2}^3.
\end{align}
From \eqref{appr1} and \eqref{appr2} it follows that
$$
\mm_\varepsilon'(t) \le \sqrt{\mm_\varepsilon(t)}\Big(\|u(t)\|_{H^{2k+m}} + \sum_{0 \le |\mathfrak j| \le 2k+m}(1+ \|(1 + |x|)^{\ell-1}\partial^\mathfrak j u(t)\|_{L^2})^3\Big),
$$
which implies
$$
\sqrt{\mm_\varepsilon(t)} \lesssim \sqrt{\mm_\varepsilon(0)} + \int_0^t\Big(\|u(t')\|_{H^{2k+m}} + \sum_{0 \le |\mathfrak j| \le 2k+m}(1+ \|(1 + |x|)^{\ell-1}\partial^\mathfrak j u(t')\|_{L^2})^3\Big)\,dt'.
$$
Letting $\varepsilon \to 0$, by Fatou's lemma we get $\sup_{0 \le t' \le t} \||x|^\ell \partial^\mathfrak m u\|_{L^2} < \infty$ for all $t \in [0,T^*)$.
\end{proof}

\begin{proof}[Proof of Proposition \ref{mom prop}]
In view of  Lemma \ref{boot} it suffices to show that
\begin{align}\label{appr3}
\sup_{0\le t' \le t}\||x|\partial^{\mathfrak j}u(t')\|_{L^2} <
\infty\;\;\mbox{for all}\;\;|\mathfrak j| \le
\alpha^*-2k\;\;\mbox{and}\;\; t \in [0, T^*),
\end{align}
provided $u \in C([0,T^*); H^{\alpha^*})$. In fact, we can
obtain the same estimates as \eqref{appr1} and \eqref{appr2} to
$\mm_\varepsilon$ for the case $\ell = 1$ to get
$$
\sqrt{\mm_\varepsilon(t)} \lesssim \sqrt{\mm_\varepsilon(0)} + \int_0^t
\big(\|u(t)\|_{H^{\alpha^*}} + \|u(t)\|_{H^{\alpha^*}}^3\big)\,dt'.
$$
A limiting argument implies \eqref{appr3}. This completes the proof of Proposition
\ref{mom prop}.
\end{proof}

\section{Appendix}
In this section we provide a proof of the local well-posedness of
Hartree equation \eqref{main eqn}. Here we only assume that $\alpha
> 0$ and $\psi \in L^\infty$.

\begin{proposition}\label{local}
Let $\psi \in L^\infty$. Let $\alpha > 0$, $0 < \gamma < n$ and $n
\ge 1$. Suppose $\varphi \in H^s(\mathbb{R}^n)$ with $s \ge \frac
\gamma 2$. Then there exists a positive time $T$ such that Hartree
equation \eqref{main eqn} has a unique solution $u \in C([0,T]; H^s)
\cap C^1([0, T]; H^{s-\alpha})$. Moreover, if $T^*$ is the maximal existence time and is finite, then $\lim\limits_{t \nearrow
T^*}\|u(t)\|_{H^\frac\gamma2} = \infty$.
\end{proposition}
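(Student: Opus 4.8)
The plan is to run a standard contraction mapping argument on the Duhamel formulation
\[
u(t) = e^{-it|\nabla|^\alpha}\varphi - i\int_0^t e^{-i(t-t')|\nabla|^\alpha}F(u(t'))\,dt',
\]
working purely at the level of $H^s$ with $s \ge \gamma/2$, with no Strichartz estimates needed since the Hartree term is energy-subcritical in this regularity range. The key nonlinear estimate is that $F$ maps $H^s$ to $H^s$ with a locally Lipschitz bound: one shows
\[
\|F(u)\|_{H^s} \lesssim \|V_\gamma(|u|^2)\|_{?}\,\|u\|_{H^s} \lesssim \|u\|_{H^s}^3,
\]
and the corresponding difference estimate $\|F(u)-F(v)\|_{H^s} \lesssim (\|u\|_{H^s}^2+\|v\|_{H^s}^2)\|u-v\|_{H^s}$. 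First I would prove the algebra-type bound: writing $F(u) = -\big((\psi/|\cdot|^\gamma)*|u|^2\big)u$, use the fractional Leibniz (Kato–Ponce) rule to distribute $\langle\nabla\rangle^s$, so that it suffices to control $\|\langle\nabla\rangle^s\big((\psi/|\cdot|^\gamma)*|u|^2\big)\|_{L^{p}}$ and $\|(\psi/|\cdot|^\gamma)*|u|^2\|_{L^\infty}$ against Sobolev norms of $u$; since $\psi\in L^\infty$, Hardy–Littlewood–Sobolev together with the condition $s\ge\gamma/2$ (which is exactly what makes $|u|^2 \in \dot H^{?}$ at the right integrability and the convolution with $|\cdot|^{-\gamma}$ land back in $L^\infty$ or in the needed $L^p$) closes these. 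The condition $0<\gamma<n$ is used to keep $|\cdot|^{-\gamma}$ in the HLS-admissible range.

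With the nonlinear estimate in hand, I would set up the complete metric space $X_T = \{u \in C([0,T];H^s) : \|u\|_{L^\infty_t H^s_x} \le 2\|\varphi\|_{H^s}\}$ with metric induced by $L^\infty_t H^s_x$, and show that the Duhamel map $\Phi$ is a contraction on $X_T$ for $T = T(\|\varphi\|_{H^s})$ small enough: the free propagator $e^{-it|\nabla|^\alpha}$ is an isometry on $H^s$, and the integral term contributes $\lesssim T\|u\|_{L^\infty_t H^s}^3 \le T\cdot 8\|\varphi\|_{H^s}^3$, which is $\le \|\varphi\|_{H^s}$ once $T$ is small; the difference estimate gives contraction after possibly shrinking $T$ further. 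This yields a unique fixed point, hence a unique solution in $X_T$, and a standard continuity/uniqueness-in-the-full-space argument upgrades uniqueness to all of $C([0,T];H^s)$. That $\partial_t u \in C([0,T];H^{s-\alpha})$ follows by reading off the equation $i\partial_t u = |\nabla|^\alpha u + F(u)$, since $|\nabla|^\alpha u \in C([0,T];H^{s-\alpha})$ and $F(u) \in C([0,T];H^s) \subset C([0,T];H^{s-\alpha})$.

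For the blow-up alternative: iterating the local theory from any time gives a maximal $T^*$, and if $T^* < \infty$ then $\|u(t)\|_{H^s}$ must be unbounded as $t\nearrow T^*$ (otherwise one could restart the fixed-point argument with a uniform existence time and extend past $T^*$). To get the sharper statement that $\|u(t)\|_{H^{\gamma/2}} \to \infty$, I would re-examine the local existence time: the argument above actually shows that the existence time depends only on $\|\varphi\|_{H^{\gamma/2}}$ at the price of using the persistence-of-regularity bound $\|u(t)\|_{H^s} \le \|\varphi\|_{H^s}\exp\!\big(C\int_0^t \|u(t')\|_{H^{\gamma/2}}^2\,dt'\big)$, obtained from Gronwall applied to the $H^s$ estimate once the low-regularity norm is controlled. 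Hence finiteness of $\sup_{[0,T^*)}\|u(t)\|_{H^{\gamma/2}}$ would force finiteness of $\sup_{[0,T^*)}\|u(t)\|_{H^s}$ and allow continuation, a contradiction.

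The main obstacle I anticipate is the nonlinear estimate in $H^s$ for general (possibly large) $s$ and for $\gamma$ close to $n$: one must check that the fractional Leibniz distribution of derivatives never forces an estimate outside the HLS range, and in the borderline case $s = \gamma/2$ some care is needed to land $(\psi/|\cdot|^\gamma)*|u|^2$ in $L^\infty$ rather than merely in BMO — here it is convenient to split the convolution kernel into $|\cdot|^{-\gamma}\chi_{|x|<1}$ and $|\cdot|^{-\gamma}\chi_{|x|\ge1}$ and treat the tail by $L^2\times L^2$ duality using $\gamma<n$, while the singular part is handled by HLS since $\gamma<n$ and $|u|^2$ lives in the right Lebesgue space by Sobolev embedding from $H^{\gamma/2}$. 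Everything else is routine.
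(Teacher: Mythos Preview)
Your approach is essentially the paper's: a contraction on the Duhamel map in an $H^s$-based space, closing the nonlinear estimate via fractional Leibniz and Hardy--Littlewood--Sobolev. Two tactical differences are worth flagging. First, the paper runs the contraction in the \emph{weak} metric $d(u,v)=\|u-v\|_{L^\infty_T L^2}$ rather than $L^\infty_T H^s$; this spares you the $H^s$ difference estimate and is slightly cleaner. Second, and more substantively, your proposed route to $\|V_\gamma(|u|^2)\|_{L^\infty}$ at the borderline $s=\gamma/2$ does not close as written: splitting the kernel and invoking HLS on the singular part lands you at the forbidden endpoint $L^{n/(n-\gamma)}\to L^\infty$ of HLS, and Young's inequality on $|\cdot|^{-\gamma}\chi_{|x|<1}\in L^1$ would require $|u|^2\in L^\infty$. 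The paper sidesteps this by using the Hardy--Sobolev inequality directly,
\[
\sup_{x}\int \frac{|u(x-y)|^2}{|y|^\gamma}\,dy \lesssim \|u\|_{\dot H^{\gamma/2}}^2,
\]
which is valid for $0<\gamma<n$ and immediately gives the $L^\infty$ bound. With this in hand, the paper's nonlinear estimate already reads $\|F(u)\|_{H^s}\lesssim \|u\|_{H^{\gamma/2}}^2\|u\|_{H^s}$, so the local existence time depends only on $\|\varphi\|_{H^{\gamma/2}}$ and the blow-up criterion follows without your Gronwall/persistence detour (though that argument is also valid).
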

\begin{proof} We use the standard
contraction mapping argument. So we shall be brief.

Let $(X(T,\rho), d)$ be a complete metric space with metric $d$
defined by
$$
X(T, \rho) = \{u \in L_T^\infty (H^s(\mathbb{R}^n)):\;
\|u\|_{L_T^\infty H^s} \le \rho\}, \;\; d_X(u, v) = \|u -
v\|_{L_T^\infty L^2}.
$$
We define a mapping $\mathcal N: u \mapsto \mathcal N(u)$ on $X(T,
\rho)$ by
\begin{align}\label{nonlinear func}
\mathcal N(u)(t) = U(t)\varphi - i\int_0^t U(t-t')F(u)(t')\,dt',
\end{align}
where $U(t) = e^{-it|\nabla|^\alpha}$.  For $u \in X(T, \rho)$ and
$s \ge \frac\gamma2$ we estimate
\begin{align}\begin{aligned}\label{cont1}
\|\mathcal N(u)\|_{L_T^\infty H^s}&\le \|\varphi\|_{H^s} + T\|F(u)\|_{L_T^\infty H^s}\\
&\lesssim \|\varphi\|_{H^s} +
T\big(\|V_{\gamma}(|u|^2)\|_{L_T^\infty
L^\infty}\|u\|_{L_T^\infty H^s}\\
&\qquad\qquad\qquad\qquad +
\|V_{\gamma}(|u|^2)\|_{L_T^\infty H_{\frac{2n}{\gamma}}^s}\|u\|_{L_T^\infty L^\frac{2n}{n - \gamma}}\big)\\
&\lesssim \|\varphi\|_{H^s} +
T\big(\|V_{\gamma}(|u|^2)\|_{L_T^\infty
L^\infty}\|u\|_{L_T^\infty H^s}\\
&\qquad\qquad\qquad\qquad +
\|V_{\gamma}(\big\langle \nabla\big\rangle^{s}(|u|^2))\|_{L_T^\infty L^{\frac{2n}{\gamma}}}\|u\|_{L_T^\infty L^\frac{2n}{n - \gamma}}\big)\\
&\lesssim \|\varphi\|_{H^s} +
T\big(\|V_{\gamma}(|u|^2)\|_{L_T^\infty
L^\infty}\|u\|_{L_T^\infty H^s}\\
&\qquad\qquad\qquad\qquad +
\|\big\langle \nabla \big\rangle^{s}(|u|^2)\|_{L_T^\infty L^{\frac{2n-\gamma}{2n}}}\|u\|_{L_T^\infty L^\frac{2n}{n - \gamma}}\big)\\
&\lesssim \|\varphi\|_{H^s} + T\big( \|u\|_{L_T^\infty H^\frac
\gamma2}^2\|u\|_{L_T^\infty H^s} + \|u\|_{L_T^\infty
L^\frac{2n}{n-\gamma}}^2\|u\|_{L_T^\infty H^s}\big)\\
&\lesssim \|\varphi\|_{H^s} + T \|u\|_{L_T^\infty
H^\frac\gamma2}^2\|u\|_{L_T^\infty H^s} \lesssim \|\varphi\|_{H^s} +
T\rho^3.
\end{aligned}\end{align}
Here we used the generalized Leibniz rule (Lemma A1-A4 in Appendix of \cite{chwe}) for the second and fifth inequalities, the fractional integration for the fourth one, and the trivial inequality $$V_\gamma = \int_{\mathbb{R}^n}
\frac{\psi(x-y)}{|x-y|^\gamma}|u(y)|^2\,dy \le
\|\psi\|_{L^\infty}\int_{\mathbb R^n} |x-y|^{-\gamma}|u(y)|^2\,dy,$$
 the Hardy-Sobolev inequality
\[
\sup_{x \in \mathbb{R}^n}\left|\int_{\mathbb{R}^n}
\frac{|u(x-y)|^2}{|y|^\gamma}\,dy\right| \lesssim
\|u\|_{\dot{H}^\frac \gamma2}^2,
\]
and the Sobolev embedding $H^\frac \gamma2 \hookrightarrow
L^\frac{2n}{n - \gamma}$ for the last one.
If we choose $\rho$ and $T$ such as $\|\varphi\|_{H^s} \le \rho/2$
and $CT\rho^3 \le \rho/2$, then $\mathcal N$ maps $X(T, \rho)$ to
itself.

Now we show that $\mathcal N$ is a Lipschitz map with a sufficiently
small $T$. Let $u, v \in X(T, \rho)$. Then we have
\begin{align*}
d_X(\mathcal N(u), \mathcal N(v)) &\lesssim
T\left\|V_{\gamma}(|u|^2)u -
V_\gamma(|v|^2)v \right\|_{L_T^\infty L^2}\\
&\lesssim T\big(\left\|V_\gamma(|u|^2)(u - v)\right\|_{L_T^\infty
L^2} + \left\|V_\gamma(|u|^2 -
|v|^2)v\right\|_{L_T^\infty L^2}\big)\\
&\lesssim T\big(\|u\|_{L_T^\infty H^\frac \gamma2}^2d_X(u, v) +
\|V_\gamma(|u|^2 - |v|^2)\|_{L_T^\infty
L^\frac{2n}{\gamma}}\|v\|_{L_T^\infty
L^\frac{2n}{n-\gamma}}\big)\\
&\lesssim T(\rho^2d_X(u, v) + \rho\||u|^2 -
|v|^2\|_{L_T^\infty L^\frac{2n}{2n-\gamma}})\\
&\lesssim T(\rho^2  + \rho(\|u\|_{L_T^\infty L^\frac{2n}{n-\gamma}}
+ \|v\|_{L_T^\infty
L^\frac{2n}{n-\gamma}}))d_X(u, v)\\
&\lesssim T\rho^2 d_X(u, v).
\end{align*}
The above estimate implies that the mapping $\mathcal N$ is a
contraction, if $T$ is sufficiently small.
The uniqueness and time regularity follow easily from the equation
\eqref{main eqn} and a similar contraction argument.

Finally,  let $T^*$ be the maximal existence time. If $T^* <
\infty$, then it is obvious from the estimate \eqref{cont1} and the
standard local well-posedness theory that $ \lim_{t\nearrow
T^*}\|u(t)\|_{H^\frac\gamma2} = \infty$. This completes the proof of
Proposition \ref{local}.
\end{proof}

\section*{Acknowledgments}
The authors would like to thank anonymous referee for his/her
valuable comments. Y. Cho supported by NRF grant 2012-0002855
(Republic of Korea), G. Hwang supported by NRF grant
2012R1A1A1015116, 2012R1A1B3001167 (Republic of Korea),  S. Kwon
partially supported by TJ Park science fellowship and NRF grant
2010-0024017 (Republic of Korea), S. Lee supported  in part by NRF
grant 2009-0083521 (Republic of Korea).\medskip

\label{lastpage}

\begin{thebibliography}{00}







\bibitem{caz} {T. Cazenave},  {\it Semilinear Schr\"{o}dinger equations
Courant Lecture Notes in Mathematics, 10. New York University, Courant Institute of Mathematical Sciences} (New York : American
Mathematical Society, Providence, RI, 2003)





\bibitem{chonak} Y. Cho and K. Nakanishi, {On the global existence of semirelativistic Hartree equations}, {\it RIMS Kokyuroku Bessatsu}, \textbf{B22} (2010), 145-166.


\bibitem{chooz-ccm} Y. Cho and T. Ozawa, {Sobolev inequalities with
symmetry}, {\it Comm. Contem. Math.}, {\bf 11} (2009), 355--365.



\bibitem{chozsash} Y. Cho, T. Ozawa, H. Sasaki and Y. Shim, {Remarks on the semirelativistic Hartree equations}, {\it DCDS-A} \textbf{23} (2009), 1273-1290.


\bibitem{chwe}  F. M. Christ and M. I. Weinstein, {\it Dispersion of small amplitude solution of the generalized Korteweg-de Vries equation},
J. Func. Anal., \textbf{100} (1991), 87--109.





\bibitem{frohlenz2} J. Fr\"{o}hlich and E. Lenzmann, {Blow-up for nonlinear wave equations describing Boson
stars}, {\it Comm. Pure Appl. Math.}, \textbf{60} (2007), 1691--1705.




\bibitem{gl} {\sc R. T. Glassey}, {On the blowing up of solutions to the Cauchy problem for nonlinear Schr\"{o}dinger equations}, {\it J. Math. Phys.} \textbf{18} (1977), 1794--1797.


























\bibitem{oz} {\sc T. Ozawa}, {Remarks on proofs of conservation laws for nonlinear Schr\"{o}dinger
equations}, {\it Cal. Var. PDE.}, \textbf{25} (2006), 403--408.






\bibitem{st-sing} {E. M. Stein}, {\it Singular integrals and
differentaibility properties of functions}, (New Jersey : Princeton University
Press,1970)

\bibitem{st} {E. M. Stein}, {\it Harmonic Aanlysis}, (New Jersey : Princeton
University Press, 1993)

\bibitem{s-w} E. M. Stein and G. Weiss, {Fractional integrals on n-dimensional Euclidean space},
{\it J. Math. Mech.} \textbf{7} (1958) 503--514.

\end{thebibliography}
\end{document}